\title{Minimum blocking sets for families of partitions}
\author{Guillermo Gamboa Quintero\thanks{Computer Science Institute of Charles University, Prague, Czechia. \textit{E-mail}: gamboa@iuuk.mff.cuni.cz. Supported by Charles University project PRIMUS/24/SCI/012.}
\and
Ida Kantor\thanks{Computer Science Institute of Charles University, Prague, Czechia. \textit{E-mail}: ida@iuuk.mff.cuni.cz. Supported by GAČR grant 22-19073S.}}
\date{}
\begin{document}

\maketitle

\newtheorem{theorem}{Theorem}
\newtheorem{definition}{Definition}
\newtheorem{corollary}{Corollary}
\newtheorem{lemma}{Lemma}
\newtheorem{observation}{Observation}

\begin{abstract}
    A $3$-partition of an $n$-element set $V$ is a triple of pairwise disjoint nonempty subsets $X,Y,Z$ such that $V=X\cup Y\cup Z$. We determine the minimum size $\varphi_3(n)$ of a set $\mathcal{E}$ of triples such that for every $3$-partition $X,Y,Z$ of the set $\{1,\dots,n\}$, there is some $\{x,y,z\}\in \mathcal{E}$ with $x\in X$, $y\in Y$, and $z\in Z$. In particular,
        \[\varphi_3(n)=\left\lceil \frac{n(n-2)}{3} \right\rceil.\]

    For $d>3$, one may define an analogous number $\varphi_d(n)$. We determine the order of magnitude of $\varphi_d(n)$, and prove the following upper and lower bounds, for $d>3$:
    \[\frac{2 }{d!}\cdot n^{d-1} -o(n^{d-1}) \leq \varphi_d(n) \leq \frac{0.86}{(d-1)!}\cdot n^{d-1}+o(n^{d-1}).\]
\end{abstract}

% REQUIRED
%\begin{keywords}
%Blocking set, partition, transversal, Tur\'{a}n theory, hypergraph.
%\end{keywords}

% REQUIRED
%\begin{MSCcodes}
%05D15
%\end{MSCcodes}

\section{Minimum blocking set}
In this paper, we are interested in the following extremal problem: given $n$, find a smallest possible family $\mathcal{E}$ of triples of elements of $\{1,\dots,n\}$ such that whenever we partition $\{1,\dots,n\}$ into three nonempty sets $X,Y,Z$, there is some triple $\{x,y,z\}\in \mathcal{E}$ for which $x\in X$, $y\in Y$ and $z\in Z$. We call such family a {\em $3$-blocking set}.

Given a hypergraph $\mathcal{H}$, a {\em transversal} (or {\em edge cover}, or {\em blocking set}) is a set of vertices that intersects all hyperedges of $\mathcal{H}$. Transversals are objects that have been thoroughly studied in discrete mathematics (see \cite{alon90:transversals, bujtas12:transversals, henning18:transversals, henning20:transversals,lai90:transversals, lonc13:transversals} for a selected bibliography). Naturally, one is looking for a tranversal of minimum possible size. Our problem is a particular instance of this: consider the auxiliary hypergraph where the vertices are all triples of numbers in $\{1,\dots,n\}$, and each hyperedge corresponds to the partition $X,Y,Z$ of $\{1,\dots,n\}$ into three nonempty parts: the corresponding hyperedge contains all triples that have one vertex in each of the three parts.
For hypergraphs that are well-behaved, the asymptotic solution is known (see~\cite{GS} or \cite{Pippenger_Spencer}, and many later papers).
However, these results do not apply to our hypergraph, as it does not satisfy the conditions (it is far from regular, and the maximum codegree is close to the maximum degree).

%In fact, there is a strong connection to Tur\'{a}n theory of hypergraphs. If $\mathcal{F}$ is a family of hypergraphs, then the Tur\'{a}n number ${\rm ex}(n,\mathcal{F})$ is the maximum number of edges in an $n$-vertex hypergraph that does not contain any hypergraph in $\mathcal{F}$ as a subgraph. If $B$ is a $3$-blocking set, its complement (i.e., the family of triples that are not in $B$) is a hypergraph that has no spanning complete $3$-partite subhypergraph. That is, if $\mathcal{F}_n$ is the family of all spanning $3$-partite $3$-uniform hypergraphs on $n$ vertices, we are looking for the Turán number ${\rm ex}(n,\mathcal{F}_n)$. In particular, we have $\binom{n}{3} - {\rm ex}(n, \mathcal{F}_n)=\Theta(n^2)$, but we are interested in determining this number exactly. Usually in Tur\'{a}n theory problems, the family $\mathcal{F}$ is fixed and the same for all $n$, while in our case, the forbidden hypergraphs are spanning. Turán numbers of hypergraphs are notoriously hard to handle (see for example the nice surveys~\cite{Turan_type} or~\cite{Keevash}) but in a lot of particular cases, a progress has been made.

Our problem also has a strong connection to Tur\'{a}n theory of hypergraphs. If $\mathcal{F}$ is a family of hypergraphs, then the Tur\'{a}n number ${\rm ex}(n,\mathcal{F})$ is the maximum number of edges in an $n$-vertex hypergraph that does not contain any hypergraph in $\mathcal{F}$ as a subgraph. Turán numbers of hypergraphs are notoriously hard to handle (see, for example, the nice surveys~\cite{Turan_type, Keevash}) but in a lot of particular cases, a progress has been made. In our case, if $B$ is a $3$-blocking set, its complement $B'$ (i.e., the family of triples that are not in~$B$) is a hypergraph that has no spanning complete $3$-partite subhypergraph. To see this, consider the case where $B'$ has such a subhypergraph with nonempty parts $X, Y, Z$. Then, it follows that $B$ does not contain a triple of the form $\{ x,y,z \}$ for $x \in X, y \in Y$ and $z \in Z$. So, we are looking for ${\rm ex}(n,\mathcal{F}_n)$, where $\mathcal{F}_n$ is the family of all spanning $3$-partite $3$-uniform hypergraphs on $n$ vertices. We have $\binom{n}{3} - {\rm ex}(n, \mathcal{F}_n)=\Theta(n^2)$, but we are interested in determining this number exactly. Usually in Tur\'{a}n theory problems, the family $\mathcal{F}$ is fixed and the same for all $n$, while in our case, the forbidden hypergraphs are spanning. Examples where the configuration in question is spanning include, e.g., theorems that specify conditions (in graphs or hypergraphs) that guarantee a Hamiltonian cycle, a packing by copies of a particular graph or the presence of a spanning subhypergraph of bounded degree (see, e.g.,~\cite{Kuhn_Osthus, Rodl_Ruc, alon_spanning}).

We may define a $d$-blocking set analogously, as follows.
\begin{definition}
If $V$ is a finite set and $d\in \mathbb{N}$, a {\em $d$-partition} of $V$ is a $d$-tuple of sets $\{X_1,\dots,X_d\}$ that are all nonempty and pairwise disjoint, and $V=X_1\cup\dots \cup X_n$.
\end{definition}

The set of all $d$-element subsets of $V$ will be denoted by $\binom{V}{d}$.
\begin{definition}
    Let $d$ be a positive integer. If $\mathcal{E}\subseteq \binom{V}{d}$, we say that $\mathcal{E}$ {\em blocks} the $d$-partition $\{X_1,\dots,X_d\}$ if there is a $d$-tuple $\{x_1,\dots,x_d\}\in \mathcal{E}$ such that $x_i\in X_i$ for all $i$. We call $\mathcal{E}$ a {\em $d$-blocking set} if it blocks all $d$-partitions of~$V$. Denote $\varphi_d(n)$ the size of the smallest $d$-blocking set for an $n$-element set $V$.
\end{definition}

In a way, a $d$-blocking set is a natural generalization of a connected graph to $d$-uniform hypergraphs. If $E$ is the edge set of a connected graph, then for every partition of vertices into two nonempty sets, we have an edge with one vertex in each set. In other words, the edge set of a connected graph is precisely what we call a $2$-blocking set. The minimum size of a connected graph on $n$ vertices is, trivially, $n-1$.

There are many ways to extend the notion of connectivity to $d$-uniform hypergraphs. In most of these, determining the minimum size of a connected hypergraph is not too interesting. However, we may, inspired by previous paragraph, say that a $d$-uniform  hypergraph is {\em arch-connected} if for every partition of the vertex set into $d$ nonempty classes, there is an edge intersecting all classes. Determining the minimum number of edges in a hypergraph which is connected in this sense is the same as determining the size of minimum $d$-blocking set. Some related problems seem to have been considered in the literature, but while we are looking for a partition that minimizes the number of edges (of a given hypergraph) that intersect all parts, in some papers the aim is to maximize this number, or to minimize the number of edges that are not contained in one part, etc. See, e.g.,~\cite{Conlon_Fox_Kwan_Sudakov}, ~\cite{Bollobas_Scott}, or~\cite{Papa_Markov}. See also~\cite{pttn_connected} for a related concept of {\em partition-connectedness}.

%Similar problems seem to have been considered in the literature, but while we are looking for a partition that minimizes the number of edges (of a given hypergraph) that intersect all parts, usually the aim has been the opposite. For instance, in [Conlon et al] the aim is to find a partition that {\em maximizes} the number of edges (of a given hypergraph) that intersect all parts. In the survey [Papa, Markov], the aim is to develop an algorithmic approach to finding a partition that minimizes some cost function. However, the cost functions considered there are a little different than ours (e.g., they consider the number of hyperedges that have vertices in more than one part). Slightly related is also the concept of {\em judicious partitions} of a hypergraph, see, e.g....

Most of this paper concentrates on $d=3$, which is the first non-trivial case of our problem. We determine the minimum size of a $3$-blocking set exactly, for all $n$.

\begin{theorem}\label{thm:main3}
For all $n$, we have $\varphi_3(n)=\left\lceil \frac{n(n-2)}{3} \right\rceil$.
\end{theorem}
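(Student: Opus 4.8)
The plan is to attack the two bounds separately; the lower bound is the cleaner half. For each vertex $v$ I would introduce its \emph{link} $G_v$, the graph on $V\setminus\{v\}$ whose edges are the pairs $\{x,y\}$ with $\{v,x,y\}\in\mathcal{E}$. The key observation is that the $3$-partitions in which one part is the singleton $\{v\}$ already force a lot: blocking $(\{v\},A,B)$ means finding an edge of $G_v$ crossing the cut $(A,B)$, and demanding this for \emph{every} partition $A\sqcup B=V\setminus\{v\}$ is exactly the statement that $G_v$ is connected. A connected graph on $n-1$ vertices has at least $n-2$ edges, so $|E(G_v)|\ge n-2$ for every $v$. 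Since each triple $\{x,y,z\}\in\mathcal{E}$ contributes exactly one edge to each of $G_x,G_y,G_z$, I get
\[ 3|\mathcal{E}| \;=\; \sum_{v\in V}|E(G_v)| \;\ge\; n(n-2), \]
and as $|\mathcal{E}|$ is an integer this yields $|\mathcal{E}|\ge\lceil n(n-2)/3\rceil$.

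This calculation also dictates the shape of an optimal construction and is my guide for the upper bound. Equality forces $\sum_v\bigl(|E(G_v)|-(n-2)\bigr)$ to be as small as possible: when $3\mid n(n-2)$ (that is, $n\not\equiv1\pmod 3$) every link must be a spanning tree, while when $n\equiv1\pmod 3$ all links are spanning trees except one, which carries a single extra edge. So I would aim to build a $3$-uniform hypergraph whose vertex links are spanning trees, with that one local modification in the residual case. Connectivity of all links then guarantees for free that every partition having a singleton part is blocked, so these constraints are automatically met.

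The real content is that singleton-part partitions are \emph{not} the only ones: I must also block every partition all of whose parts have size at least $2$, and here link-connectivity alone is insufficient (a path in $G_z$ from $X$ to $Y$ may leave and re-enter through $Z$, producing a non-rainbow triple). My plan is to realize the tree links through an explicit rotational construction on $\mathbb{Z}_n$: fix a small set of base difference patterns, take all their cyclic shifts, and tune this set so that (i) the link of each vertex is a Hamiltonian path, and (ii) every partition into parts of size $\ge 2$ contains a transversal triple. For $n=6$, for instance, the eight triples obtained as cyclic shifts of $\{i,i+1,i+3\}$ together with $\{1,3,5\}$ and $\{2,4,6\}$ already achieve both. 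The case $n\equiv1\pmod 3$ is handled by adding one further triple to repair the count.

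I expect the main obstacle to be precisely step (ii): showing that the chosen family leaves no partition with all parts large unblocked. The lower bound is completely insensitive to these partitions—it is driven entirely by the singleton-part constraints—so there is genuine slack in the construction, but verifying that a single cyclic choice simultaneously yields tree links \emph{and} meets every balanced partition is the delicate point. I would isolate a clean sufficient condition on the base difference set that implies (ii) uniformly for all $n$ in a fixed residue class, prove that condition once, and then dispatch the boundary cases and the ceiling adjustment by a short direct check.
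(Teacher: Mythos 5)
Your lower bound is correct and is essentially the paper's argument verbatim: blocking all partitions with singleton part $\{v\}$ forces the link $G_v$ to be connected, hence $|E(G_v)|\ge n-2$, and double counting pairs (vertex, incident triple) gives $3|\mathcal{E}|\ge n(n-2)$, so $|\mathcal{E}|\ge\lceil n(n-2)/3\rceil$. Your analysis of when equality can hold (all links spanning trees, except one link with a single surplus edge when $n\equiv 1\pmod 3$) is also sound and correctly identifies the shape an optimal construction must have.

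The genuine gap is the upper bound, which is where essentially all of the work in this theorem lies. You supply one verified-looking instance ($n=6$), a heuristic recipe (``cyclic shifts of a base difference pattern, tuned so that links are Hamiltonian paths and all partitions with parts of size $\ge 2$ are blocked''), and an explicit admission that the sufficient condition guaranteeing property (ii) for all $n$ in a residue class is not yet isolated or proved. That condition is precisely the hard content: your own discussion notes that link connectivity handles only the singleton-part partitions, and nothing in the proposal establishes that any particular cyclic family meets every partition into three parts each of size at least $2$. Until a general construction is written down and that verification is carried out, the theorem is only half proved. For comparison, the paper does not use a single cyclic difference family on $\mathbb{Z}_n$; it splits $V$ into three classes $A_0,A_1,A_2$ of size $\lfloor n/3\rfloor$ (plus up to two extra vertices), takes all triples $\{a_i,a_j,b_{i+j}\}$ and $\{a_i,a_j,b_{i+j+1}\}$ between each pair of classes together with a small set of additional triples depending on $n\bmod 3$, and then verifies blocking via a characterization stronger than link connectivity: $\mathcal{E}$ is a $3$-blocking set if and only if $G_{\mathcal{E}}(X)$ is connected for \emph{every} subset $X\subseteq V$, not just singletons. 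That reformulation is what makes the case analysis tractable, and the resulting verification still occupies four sections of the paper; you should expect a comparable amount of unavoidable work on whichever construction you choose.
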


For $d>3$, we determine the order of magnitude for the minimum size of a $d$-blocking set, and present nontrivial bounds on the multiplicative coefficient of the leading term.

\begin{theorem}
    For $d>3$, we have $\frac{2 }{d!}n^{d-1} -o(n^{d-1}) \leq \varphi_d(n) \leq \frac{0.86}{(d-1)!}n^{d-1}+o(n^{d-1}).$
\end{theorem}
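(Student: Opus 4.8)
The two inequalities are essentially independent, so I would prove them separately, and I am considerably more confident about the lower bound.

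For the lower bound the plan is to read off, for every $(d-2)$-subset of $V$, a connected auxiliary graph, and then double count. Fix a $(d-2)$-set $T\subseteq V$ and let $G_T$ be the graph on vertex set $V\setminus T$ whose edges are exactly the pairs $\{u,v\}$ with $T\cup\{u,v\}\in\mathcal{E}$. I claim $G_T$ must be connected: if $V\setminus T=U\sqcup W$ with $U,W$ nonempty and no $G_T$-edge crossing, then the $d$-partition consisting of the $d-2$ singletons of $T$ together with $U$ and $W$ is unblocked, since every transversal of it must pick all of $T$ plus one vertex of $U$ and one of $W$, i.e.\ be of the form $T\cup\{u,v\}$ with $u\in U$, $v\in W$, and no such set lies in $\mathcal{E}$. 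A connected graph on $n-d+2$ vertices has at least $n-d+1$ edges, so $|E(G_T)|\ge n-d+1$. Each $d$-set of $\mathcal{E}$ contributes its complementary pair to $G_T$ for exactly $\binom{d}{d-2}=\binom{d}{2}$ choices of $T$, so summing over all $T$ gives $\binom{d}{2}\,|\mathcal{E}|=\sum_T|E(G_T)|\ge\binom{n}{d-2}(n-d+1)$. Dividing, $|\mathcal{E}|\ge\binom{n}{d-2}(n-d+1)/\binom{d}{2}$, whose leading term is precisely $\tfrac{2}{d!}n^{d-1}$, as required (valid for $n\ge d$).

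For the upper bound, the order of magnitude and the weaker constant $1/(d-1)!$ come for free from the star $\mathcal{E}_0=\{e\in\binom{V}{d}:1\in e\}$: for any $d$-partition the set of minima of its parts is a transversal containing vertex $1$, hence lies in $\mathcal{E}_0$, and $|\mathcal{E}_0|=\binom{n-1}{d-1}=\tfrac{1}{(d-1)!}n^{d-1}+o(n^{d-1})$. A crucial warning is that a single star cannot be thinned at all: considering the partitions in which $d-1$ parts are singletons $\{a_1\},\dots,\{a_{d-1}\}\subseteq V\setminus\{1\}$ forces every $(d-1)$-subset of $V\setminus\{1\}$ to occur as $e\setminus\{1\}$ for some $e\in\mathcal{E}$, so any blocking set using only edges through one fixed vertex must be the \emph{entire} star. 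Beating the constant $1$ therefore requires a genuinely spread-out construction, not a sub-star.

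To reach $0.86/(d-1)!$ I would split the family of all $d$-partitions according to how many parts are \emph{large} (say, of size at least $\varepsilon n$) and handle the two regimes by different devices. If at least two parts are large, the number of transversals is at least $\varepsilon^2 n^2$, so a sparse random family in which each $d$-set is included independently with probability $p\sim c/n$ blocks any such partition with failure probability at most $e^{-c\varepsilon^2 n}$; since there are at most $d^n$ partitions in total, a union bound succeeds once $c\varepsilon^2>\ln d$, and this random layer costs only $O(n^{d-1})$ with a constant that can be driven below any prescribed level for large $d$. The remaining, costly regime is the one in which at most one part is large (equivalently at least $d-1$ parts are bounded by $\varepsilon n$); here I would use a direct, explicitly structured family tailored to block exactly these ``concentrated'' partitions. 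The main obstacle—and where the numerical value $0.86$ is won or lost—is precisely this deterministic layer together with the optimization of $\varepsilon$ against the density needed in the random regime: the concentrated partitions are simultaneously numerous ($\sim n^{d-1}$ of them) and rigid, so one must build a construction that blocks all of them while wasting only a $o(1)$ fraction of edges, and then verify that the two layers meet at a single threshold. Establishing that the explicit layer attains leading constant at most $0.86$ uniformly in $d$, with the random layer contributing only $o(n^{d-1})$, is the crux of the argument.
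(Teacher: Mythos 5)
Your lower bound argument is correct and is essentially identical to the paper's proof of Theorem~\ref{thm:lower_bound}: fix a $(d-2)$-set, observe that the link must be a $2$-blocking set (a connected graph on $n-d+2$ vertices, hence with at least $n-d+1$ edges), and double count pairs consisting of a $d$-edge and a $(d-2)$-subset of it. This part needs no changes.

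The upper bound, however, has a genuine gap: you never produce the construction that achieves the constant $0.86/(d-1)!$. Your plan splits the $d$-partitions into those with at least two large parts (handled by a random sparse layer) and the ``concentrated'' ones (to be handled by ``a direct, explicitly structured family''), but that explicit family is exactly what is missing, and you say yourself that establishing its leading constant is ``the crux of the argument.'' A sketch that defers the crux is not a proof. There is also a quantitative problem with the two-layer plan for small $d$: the random layer costs about $\frac{\ln d}{\varepsilon^2 d!}n^{d-1}$, and for $d=4$ making this small compared to $\frac{0.86}{3!}n^{3}$ forces $\varepsilon$ so large that no partition can have two parts of size at least $\varepsilon n$, collapsing the entire burden back onto the unspecified deterministic layer. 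The paper takes a completely different route: it splits $V$ into two halves $A$ and $B$, takes all unions of an $i$-subset of $A$ with a member of an optimal $(d-i)$-blocking set of $B$, verifies this blocks everything, and obtains the recurrence $\varphi_d(2k)\leq \varphi_d(k)+\sum_{i=1}^{d-1}\binom{k}{i}\varphi_{d-i}(k)$ (together with $\varphi_d(n)\leq\varphi_{d-1}(n-1)+\varphi_d(n-1)$ for odd $n$). Unwinding this gives $\varphi_d(n)\leq\gamma_d n^{d-1}+O(n^{d-2})$ with $\gamma_d=\frac{1}{2^{d-1}-1}\sum_{i=1}^{d-1}\frac{\gamma_{d-i}}{i!}$, and the constant $0.86$ comes out of an induction on this recursion seeded with the exact value $\gamma_3=\tfrac13$ from the $d=3$ theorem; the gain over the trivial star bound is entirely due to this better initialization. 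Your observation that a single star cannot be thinned is correct but does not substitute for a construction.
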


The paper is organized as follows: In Section~\ref{sec:basictool} we prove a theorem which later proves useful for the upper bound for $d=3$, and use if immediately to get a lower bound. In Section~\ref{sec:construction3}, we describe a matching construction for the upper bound. The proof of the upper bound is a bit  technical, so we postopne it to Sections~\ref{sec:twoAi}--\ref{sec:2mod3}. In Sections~\ref{sec:lbhighd} and~\ref{sec:ubhighd} we prove a lower and an upper bound on $\varphi_d(n)$ for general $d\geq 4$, respectively.

%%%%%%%%%%%%%%%%%%%%%%%%%%%%%%%%%%%%%%%%%%%%55

%%%%%%%%%%%%%%%%%%%%%%%%%%%%%%%%%%%%%%%%%%%%%%%%55

\section{Basic tool and a lower bound for \texorpdfstring{$d=3$}{}}\label{sec:basictool}

For $d=1$, there is only one $1$-partition of $V$, with a single class equal to $V$. We are looking for a smallest family $\mathcal{E}$ of $1$-element sets such that for this $1$-partition, there is a set $\{x\}\in \mathcal{E}$ such that $x$ is in the single class of this partition. For any $x\in V$, the set $\{x\}$ does the job, so $\varphi_1(n)=1$.

For $d=2$, we want a set $\mathcal{E}$ of pairs of elements of $V$ such that for every partition of $V$ into disjoint sets $X$ and $Y$, there is some $\{x,y\}\in\mathcal{E}$ with $x\in X$ and $y\in Y$. That is, $\varphi_2(n)$ is the smallest number of edges in a connected graph on $n$ vertices, i.e., $\varphi_2(n)=n-1$.

To prove the exact value for $d=3$, the following will be useful.

\begin{definition}
    If $\mathcal{E}$ is a system of triples of elements of $V$ and if $X$ is a set, we define $G_{\mathcal{E}}(X)$ to be the graph with vertex set $V\setminus X$ and edges being all pairs $\{y,z\}$ such that $\{x,y,z\}\in \mathcal{E}$ for some $x\in X$.
\end{definition}

Note that there is no assumption that $X$ is a subset of $V$. In some proofs, this will be indeed the case---$X$ will not be a subset of $V$, but it will have a nonempty intersection with it.

\begin{theorem}\label{thm:characterization}
    $\mathcal{E}$ is a 3-blocking set if and only if for each $X\subseteq V$, the graph $G_{\mathcal{E}}(X)$ is connected.
\end{theorem}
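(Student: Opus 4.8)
The plan is to reduce the statement to the elementary graph-theoretic fact that a graph $H$ on a vertex set $W$ is connected if and only if there is \emph{no} partition $W = Y \sqcup Z$ into two nonempty classes with no edge of $H$ joining $Y$ and $Z$. I would apply this with $H = G_{\mathcal{E}}(X)$, whose vertex set is $W = V \setminus X$, so that separations of $G_{\mathcal{E}}(X)$ correspond exactly to the choices of $Y,Z$ completing $X$ to a $3$-partition.

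The core of the argument is a direct translation of definitions, which I would isolate as the central observation: fix a nonempty $X \subseteq V$ and a partition $V \setminus X = Y \sqcup Z$ into nonempty parts. Then $G_{\mathcal{E}}(X)$ contains an edge with one endpoint in $Y$ and one in $Z$ if and only if there is some $\{x,y,z\}\in\mathcal{E}$ with $x\in X$, $y\in Y$, $z\in Z$, i.e.\ if and only if $\mathcal{E}$ blocks the $3$-partition $\{X,Y,Z\}$. This is immediate from the definition of $G_{\mathcal{E}}(X)$: an edge $\{y,z\}$ is present precisely when some triple of $\mathcal{E}$ consists of $y$, $z$, and a third element lying in $X$.

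With this observation both directions follow by quantifying appropriately. For the reverse implication, I would assume every $G_{\mathcal{E}}(X)$ is connected and take an arbitrary $3$-partition $\{X,Y,Z\}$; since $V\setminus X = Y\cup Z$ with $Y,Z$ nonempty and $G_{\mathcal{E}}(X)$ connected, there must be a crossing edge, which by the observation exhibits a triple of $\mathcal{E}$ blocking $\{X,Y,Z\}$. For the forward implication I would argue by contrapositive: if some $G_{\mathcal{E}}(X)$ is disconnected, group its components into two nonempty classes $Y$ and $Z$ with no edge between them; then $\{X,Y,Z\}$ is a $3$-partition that, by the observation, is not blocked, so $\mathcal{E}$ is not a $3$-blocking set.

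The one point requiring care is the range of $X$ and the degenerate cases, and I expect this bookkeeping---rather than any substantive graph theory---to be the only delicate part. If $|V\setminus X|\leq 1$ then $G_{\mathcal{E}}(X)$ is (vacuously) connected and no $3$-partition has $X$ as a part, so such $X$ contribute nothing to either side. The genuinely subtle case is $X=\emptyset$: then $G_{\mathcal{E}}(\emptyset)$ has no edges and is disconnected as soon as $|V|\geq 2$, so the quantifier should be read as ranging over nonempty $X$ (equivalently, over the parts that can actually occur in a $3$-partition). Once restricted to the relevant $X$, the equivalence is simply a correspondence between unblocked $3$-partitions and separations of the graphs $G_{\mathcal{E}}(X)$, with each $3$-partition accounted for by using each of its three parts in the role of $X$.
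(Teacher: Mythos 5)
Your proof is correct and follows essentially the same route as the paper's: both directions reduce to the observation that a crossing edge of $G_{\mathcal{E}}(X)$ between $Y$ and $Z$ is exactly a triple of $\mathcal{E}$ blocking $\{X,Y,Z\}$. Your extra care about the degenerate cases (in particular $X=\emptyset$, where $G_{\mathcal{E}}(\emptyset)$ is edgeless and the quantifier must be read over nonempty $X$) is a legitimate point that the paper's own proof silently glosses over, but it does not change the substance of the argument.
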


\begin{proof}
 Suppose that $\mathcal{E}$ is a blocking set and $X\subseteq V$. Suppose for contradiction that $V\setminus X $ can be decomposed into disjoint nonempty subsets $Y,Z$ such that there are no edges of $G_{\mathcal{E}}(X)$ going from $Y$ to $Z$. Then $\{X,Y,Z\}$ is a 3-partition that is not blocked.

 Now suppose that for each $X\subseteq V$, the graph $G_{\mathcal{E}}(X)$ is connected, and let $\{X,Y,Z\}$ be a 3-partition. Since $G_{\mathcal{E}}(X)$ is connected, there is some $y\in Y$ and $z\in Z$ such that $\{y,z\}$ is an edge of $G_{\mathcal{E}}(X)$. That is, $\{x,y,z\}\in \mathcal{E}$ for some $x\in X$. Since $\{X,Y,Z\}$ was an arbitrary 3-partition, $\mathcal{E}$ is a blocking set.
\end{proof}

This theorem will be useful later in the proof of an upper bound for $\varphi_3(n)$, for showing that a certain set is blocking. But it is useful for finding a lower bound as well: if $\mathcal{E}$ is a blocking set, then $G_{\mathcal{E}}(\{x\})$ is connected for all $x$. In this case, for each $x\in V$, $G_{\mathcal{E}}(\{x\})$ has at least $n-2$ edges (where $n$ is the number of vertices), so at least $n-2$ triples contain $x$. By double counting the number of pairs $(x,e)$ where $e$ is a triple in $\mathcal{E}$ that contains $x$, we get
\[ \varphi_3(n)\geq \left\lceil \frac{n(n-2)}{3} \right\rceil\]

There are several constructions of blocking sets with $O(n^2)$ triples. It is easy to find, e.g., constructions with $\binom{n-1}{2}$ triples, but our goal is to match the lower bound exactly.

%%%%%%%%%%%%%%%%%%%%%%%%%%%%%%%%%%%%%%%%%%%%%%%%%%%%%%%%%%%%%%%%%%%%%%%5
\section{Construction of a minimum blocking set for \texorpdfstring{$d=3$}{}}\label{sec:construction3}

In this section, we will use sets of $k$ numbered vertices, such as $\{u_0,\dots,u_{k-1}\}$, and their subsets. When indexing these vertices, for instance in writing $u_{2j+i-1}$ etc., all arithmetic operations are taken modulo $k$.

A set of vertices with consecutive indices, such as $\{u_j,u_{j+1},\dots,u_{j+s}\}$ for some $s$ will be called a {\em segment}. The indexing is again taken modulo $k$ (and the sequence $u_0,\dots,u_{k-1}$ understood as circular), so it may well happen that a segment has $u_{k-1}$ somewhere in the middle, followed by $u_0$.

Suppose now that $A$ and $B$ are disjoint $k$-element sets, with $A=\{a_0,\dots,a_{k-1}\}$ and $B=\{b_0,\dots,b_{k-1}\}$. Let us define a hypergraph $\mathcal{H}(A,B)$ on the vertex set $A\cup B$. Its edge set consists of all triples $\{a_i,a_j,b_{i+j}\}$ and all triples $\{a_i,a_j,b_{i+j+1}\}$.

For each $n\in \mathbb{N}$, we define an $n$-vertex set $V$ as follows.
For $k=\lfloor n/3\rfloor$, take the union of three disjoint $k$-element sets $A_0=\{u_0,\dots,u_{k-1}\}$, $A_1=\{v_0,\dots,v_{k-1}\}$, and $A_2=\{w_0,\dots,w_{k-1}\}$, and if $n$ is congruent to 1 or 2 (modulo 3), add also either the element $\infty$ or two elements $\infty_1,\infty_2$, respectively.

 Now let us define a set of triples $\mathcal{E}$ on the vertex set $V$. We start with all triples that are in the hypergraphs $\mathcal{H}(A_0,A_1)$, $\mathcal{H}(A_1,A_2)$, $\mathcal{H}(A_2,A_0)$, and add some more:

 \begin{itemize}
     \item If $n$ is divisible by 3, we add the $k$ triples $\{u_i,v_i,w_i\}$ for $i=0,\dots,k-1$.
     \item If $n \equiv 1 \; ({\mbox{mod }} 3)$, add the triples $\{\infty, u_i, v_i\}$, $\{\infty, v_i,w_i\}$, $\{\infty, w_i,u_{i+1}\}$ for all $i$.
     \item if $n \equiv 2 \; ({\mbox{mod }} 3)$, add the triples $\{\infty_1,u_i,v_i\}$, $\{\infty_1,v_i,w_i\}$, $\{\infty_2,w_i,u_{i+1}\}$, $\{\infty_2,u_i,v_{i+1}\}$, $\{\infty_1,\infty_2,w_i\}$ for all $i$.
\end{itemize}

%For each $n\in \mathbb{N}$, we define an $n$-vertex set $V$ as follows. For $k=\lfloor n/3\rfloor$,
%let $A_0=$$\{u_0,\dots,u_{k -1}\}$, $A_1=$$\{v_0,\dots,v_{k -1}\}$, and $A_2=\{w_0,\dots,w_{k -1}\}$, and let $\infty$, $\infty_1$ and $\infty_2$ be three pairwise distinct points not in $A_0\cup A_1\cup A_2$. Define a vertex set $V$ as:
%    \begin{itemize}
%        \item  If $n \equiv 0 \; ({\mbox{mod }} 3)$, let $V:=A_0\cup A_1\cup A_2$.
%        \item If $n \equiv 1 \; ({\mbox{mod }} 3)$, let $V:=A_0\cup A_1\cup A_2\cup \{\infty\}$.
%        \item If $n \equiv 2 \; ({\mbox{mod }} 3)$, let $V:=A_0\cup A_1\cup A_2\cup \{\infty_1,\infty_2\}$.
%    \end{itemize}

 %Now let us define a set of triples $\mathcal{E}$ %hypergraph $(V,\mathcal{E})$
 %as follows:

%\begin{itemize}
% \item $\mathcal{E}$ includes all triples in the three hypergraphs $\mathcal{H}(A_0,A_1)$, $\mathcal{H}(A_1,A_2)$, $\mathcal{H}(A_2,A_0)$.
% \item If $n \equiv 0 \; ({\mbox{mod }} 3)$, then add the $k$ triples $\{u_i,v_i,w_i\}$ for $i=0,\dots,k-1$.
% \item If $n \equiv 1 \; ({\mbox{mod }} 3)$, then add the triples $\{\infty, u_i, v_i\}$, $\{\infty, v_i,w_i\}$, $\{\infty, w_i,u_{i+1}\}$ for $0\leq i \leq k-1$.
% \item If $n \equiv 2 \; ({\mbox{mod }} 3)$, then add the triples $\{\infty_1,u_i,v_i\}$, $\{\infty_1,v_i,w_i\}$, $\{\infty_2,w_i,u_{i+1}\}$, $\{\infty_2,u_i,v_{i+1}\}$, $\{\infty_1,\infty_2,w_i\}$ for $0\leq i\leq k-1$.
%\end{itemize}

For easier reference, let us call the triples in $\mathcal{H}(A_0,A_1)$, $\mathcal{H}(A_1,A_2)$, $\mathcal{H}(A_2,A_0)$ {\em red triples} and all other triples {\em blue triples}. Note that in all three cases, the total number of triples is equal to the lower bound that we obtained earlier, namely $\left\lceil \frac{n(n-2)}{3} \right\rceil$.

We now need to prove that in each case, the set $\mathcal{E}$ is a blocking set. That is, using Theorem~\ref{thm:characterization}, we need to verify that for each $X\subseteq V$, the graph $G_{\mathcal{E}}(X)$ is connected. Let us postpone the (somewhat long and technical) proof to Sections~\ref{sec:twoAi}--\ref{sec:2mod3} and let us first talk about a lower and an upper bound for higher dimensions.

%%%%%%%%%%%%%%%%%%%%%%%%%%%%%%%%%%%%%%%%%%%%%%%%%%555
\section{Lower bound for higher dimensions}\label{sec:lbhighd}

In this section, we prove an inductive lower bound on $\varphi_d(n)$.

\begin{definition}
    Let $(V,\mathcal{E})$ be a $d$-uniform hypergraph and $X$ a subset of $V$ such that $\lvert X \rvert \leq d-2$. We define $L_{\mathcal{E}}(X)$ to be the $(d-|X|)$-uniform hypergraph with vertex set $V \setminus X$ and hyperedges being all $\tau$ such that $X \cup \tau \in \mathcal{E}$.
\end{definition}
We may consider $\mathcal{E}$ to be an abstract simplicial complex (by including all subsets of the hyperedges). Viewed this way, the set $L_{\mathcal{E}}(X)$ is the {\em link} of the face $X$.

%Let us denote
%\begin{equation*}
%\varpi_{d}(n) = \frac{ 2 n (n-1) \cdots (n-(d-3)) (n-(d-1))}{d!}.
%\end{equation*}

\begin{theorem}
\label{thm:lower_bound}
    For $d \geq 3$ and $n \geq d$, we have
    \[\varphi_d(n) \geq \frac{ 2 n (n-1) \cdots (n-(d-3)) (n-(d-1))}{d!}.\]
\end{theorem}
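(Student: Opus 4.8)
The plan is to prove the bound by downward induction on $d$, using the links $L_{\mathcal{E}}(\{x\})$ to pass from a $d$-blocking set to a $(d-1)$-blocking set on one fewer vertex, and then telescope down to the trivial case $d=2$. The engine of the whole argument is the following claim: if $\mathcal{E}$ is a $d$-blocking set on the $n$-element set $V$ and $x\in V$, then $L_{\mathcal{E}}(\{x\})$ is a $(d-1)$-blocking set on $V\setminus\{x\}$. To see this, I would take an arbitrary $(d-1)$-partition $\{Y_1,\dots,Y_{d-1}\}$ of $V\setminus\{x\}$ and adjoin the singleton $\{x\}$ to obtain a $d$-partition $\{\{x\},Y_1,\dots,Y_{d-1}\}$ of $V$. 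Since $\mathcal{E}$ blocks it, some edge of $\mathcal{E}$ meets every class; the element landing in the class $\{x\}$ must be $x$ itself, so the remaining $d-1$ elements form an edge of $L_{\mathcal{E}}(\{x\})$ with exactly one element in each $Y_i$. Hence $L_{\mathcal{E}}(\{x\})$ blocks every $(d-1)$-partition.

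Granting the claim, every degree is controlled: the edges of $\mathcal{E}$ containing $x$ are precisely the edges of $L_{\mathcal{E}}(\{x\})$, so $\deg_{\mathcal{E}}(x)=\lvert L_{\mathcal{E}}(\{x\})\rvert\geq \varphi_{d-1}(n-1)$. Summing over all $x\in V$ and invoking the handshake identity $\sum_{x\in V}\deg_{\mathcal{E}}(x)=d\,\lvert\mathcal{E}\rvert$ gives
\[ d\,\lvert\mathcal{E}\rvert\;\geq\; n\,\varphi_{d-1}(n-1), \]
and since this holds for every $d$-blocking set $\mathcal{E}$, we obtain the key recursion
\[ \varphi_d(n)\;\geq\;\frac{n}{d}\,\varphi_{d-1}(n-1), \qquad d\geq 3. \]

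I would then iterate this inequality from $d$ down to the index $3$ (the smallest index for which the singleton link is defined, since $\lvert\{x\}\rvert\le d-2$ forces $d\ge 3$), landing on $\varphi_2$, for which we already know $\varphi_2(m)=m-1$ exactly. Telescoping the $d-2$ factors yields
\[ \varphi_d(n)\;\geq\;\frac{n}{d}\cdot\frac{n-1}{d-1}\cdots\frac{n-(d-3)}{3}\cdot\varphi_2\bigl(n-(d-2)\bigr)=\frac{2\,n(n-1)\cdots(n-(d-3))}{d!}\,\bigl(n-(d-1)\bigr), \]
where the denominators multiply to $d!/2$ and $\varphi_2(n-(d-2))=n-(d-1)$, which is exactly the claimed bound. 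Note that the term $n-(d-2)$ that one might naively expect in the product is precisely the one that gets "used up": the final stage contributes $\varphi_2(n-(d-2))=n-(d-1)$ instead of a factor $n-(d-2)$, which explains the gap (the skipped term $n-(d-2)$) in the statement.

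I expect no serious obstacle here: the single substantive step is verifying the claim that links of vertices in a blocking set are themselves blocking sets, and everything after it is double counting plus index bookkeeping. The two points I would watch carefully are, first, confirming that $n\geq d$ keeps every intermediate quantity $\varphi_j(n-(d-j))$ evaluated on $n-(d-j)\geq j$ vertices so that each recursion step is legitimate and the base case $\varphi_2(n-(d-2))$ sees at least two vertices, and second, the index arithmetic in the telescoped product that produces the skipped factor.
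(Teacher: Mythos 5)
Your proof is correct, and the index bookkeeping checks out: the recursion $\varphi_d(n)\geq \frac{n}{d}\varphi_{d-1}(n-1)$ is valid for $d\geq 3$, the $d-2$ telescoped denominators multiply to $d!/2$, and $\varphi_2(n-(d-2))=n-(d-1)$ produces exactly the skipped factor in the statement. The route is genuinely different from the paper's, though built on the same underlying fact (links of blocking sets are blocking sets). The paper does a single double count: it pairs each edge $E\in\mathcal{E}$ with its $(d-2)$-element subsets $C$, observes that the link of any $(d-2)$-set is a $2$-blocking set (a connected graph on $n-d+2$ vertices, hence at least $n-d+1$ edges), and concludes $\binom{d}{2}\lvert\mathcal{E}\rvert\geq\binom{n}{d-2}(n-d+1)$ in one step. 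You instead peel off one vertex at a time, using singleton links and the handshake identity, and iterate $d-2$ times. The two arguments yield the identical bound, so nothing is lost or gained quantitatively; the paper's version is shorter and avoids any induction, while yours isolates a reusable recursion $\varphi_d(n)\geq\frac{n}{d}\varphi_{d-1}(n-1)$ that would automatically propagate any future improvement on $\varphi_{d-1}$ (or on $\varphi_3$, via stopping the iteration at $3$ rather than $2$) up to higher $d$ --- something the paper's one-shot count, which always bottoms out at the $2$-blocking estimate, does not do.
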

It follows from Theorem~\ref{thm:lower_bound} that if $d$ is fixed and $n\to \infty$, then
\[
    \varphi_d(n) \geq \frac{2 n^{d-1}}{d!} -o(n^{d-1}).
\]

\begin{proof}
    Let $\mathcal{E}$ be a blocking set. Let us count in two ways the number of pairs $(C,E)$, where $E\in \mathcal{E}$ and $C$ is a $(d-2)$-element subset of $E$.

    If $C=\{x_1,\dots,x_{d-2}\}$ is a $(d-2)$-element subset of  $[n]$ and $A\cup B$ is a partition of $[n]\setminus C$ into nonempty sets, then
    $\mathcal{E}$ blocks all partitions of the form $\{\{x_1\},\{x_2\},\dots,$ $\{x_{d-2}\},A,B\}$. This means that for each $(d-2)$-element set  $C$, if we take all sets in $\mathcal{E}$ that contain $C$ as a subset and restrict them to $[n]\setminus C$, we get a set of pairs which is a 2-blocking set of $[n]\setminus C$. So, $\mathcal{E}$ contains at least $(n-|C|)-1=n-d+1$ $d$-tuples that have $C$ as a subset, and the aforementioned number of pairs $(C,E)$ is at least $\binom{n}{d-2} \cdot (n-d+1)$.

    On the other hand, the number of $(d-2)$-tuples that each $d$-tuple in $\mathcal{E}$ contains is $\binom{d}{d-2}$. We get

    \[ |\mathcal{E}|\cdot \binom{d}{d-2}\geq \binom{n}{d-2}\cdot (n-d+1).
    \]

    Rearranging, we get the inequality in question.
 %   and so,
    %  \[ |\mathcal{E}|\geq \frac{2\cdot n\cdot \dots \cdot(n-d+3)\cdot (n-d+1)}{d!}.
   % \]
\end{proof}

%%%%%%%%%%%%%%%%%%%%%%%%%%%%%%%%%%%%%%%%%%
\section{Upper bound for higher dimensions}\label{sec:ubhighd}

If we select a point $x$ and take all $d$-tuples that contain $x$, this is a $d$-blocking set of size $\binom{n-1}{d-1}$, giving the trivial upper bound (for a fixed $d$ and for $n$ tending to $\infty$)
\begin{equation}\label{eq:trivub}
 \varphi_d(n)\leq \frac{n^{d-1}}{(d-1)!} -o(n^{d-1}).
\end{equation}

A theorem similar to Theorem~\ref{thm:characterization} characterizing the $d$-blocking sets in terms of $(d-1)$-blocking sets holds for all $d$. However, with increasing $d$, the situation seems to be too complex for such theorem to be useful. We will use a different approach and describe an inductive construction of a blocking set that establishes an upper bound better than the trivial one above.

%As we have mentioned in Section~\ref{section:tools}, a simple upper bound is $\varphi_d(n)\leq \frac{n^{d-1}}{(d-1)!} -o(n^{d-1})$.

Suppose that we have $n=2k$. Partition our point set into two $k$-elements sets, $A$ and $B$. For each $i\in \{0,\dots,d-1\}$, let $S_i=\binom{A}{i}$ and let $T_i$ be a $(d-i)$-blocking set of size $\varphi_{d-i}(k)$ for the set $B$ (i.e., consisting of $(d-i)$-element subsets of $B$). Define $W_i=\{s_i \cup t_i|s_i\in S_i \mbox{ and }t_i\in T_i\}$. Now let $W=\cup_{i=0}^{d-1} W_i$.

We claim that $W$ is a $d$-blocking set. To see that, let $X_1,\dots,X_d$ be a $d$-partition of $A\cup B$. Let $i$ be the number of indices $j$ such that $X_j\subseteq A$. If we take all $X_j$ such that $X_j\subseteq A$ and we pick an element from each of them, we will get an $i$-tuple, let us call it $s_i$. The remaining $d-i$ sets $X_j$, when restricted to $B$, form a $(d-i)$-partition of~$B$. So $T_i$ contains a $(d-i)$-tuple $t_i$ blocking this partition. The union $s_i\cup t_i$ blocks the partition $X_1,\dots,X_d$. But this set belongs to $W_i$, and thus to $W$. It follows that $W$ is a blocking set.

\begin{lemma}
    For all positive integers $d,k$, we have the following recurrence:
    \begin{align}\label{formula3}
\varphi_d(2k) &\leq \varphi_d(k) + \sum_{i=1}^{d-1}\binom{k}{i}\varphi_{d-i}(k). %\\
\end{align}
\end{lemma}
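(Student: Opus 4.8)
The plan is simply to count the size of the set $W$ that was constructed just above, since the preceding paragraph already verified that $W$ is a genuine $d$-blocking set of $A \cup B$. By the definition of $\varphi_d$ we then immediately have $\varphi_d(2k) \leq |W|$, so the whole task reduces to bounding $|W|$ from above. Writing $W = \bigcup_{i=0}^{d-1} W_i$ and using subadditivity of cardinality over a union, I would start from $|W| \leq \sum_{i=0}^{d-1} |W_i|$.

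Next I would estimate each $|W_i|$ individually. Since $W_i = \{s_i \cup t_i : s_i \in S_i,\ t_i \in T_i\}$ with $S_i = \binom{A}{i}$ and $T_i$ a fixed $(d-i)$-blocking set of $B$ of size $\varphi_{d-i}(k)$, the assignment $(s_i,t_i)\mapsto s_i\cup t_i$ from $S_i\times T_i$ onto $W_i$ gives
\[
|W_i| \leq |S_i|\cdot|T_i| = \binom{k}{i}\,\varphi_{d-i}(k).
\]
(In fact this map is injective, as $s_i$ and $t_i$ are recovered as the intersections of $s_i\cup t_i$ with $A$ and with $B$, so equality holds; but only $\leq$ is needed for the upper bound.)

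The one piece of bookkeeping worth isolating is the $i=0$ term. For $i=0$ we have $S_0=\{\emptyset\}$, hence $|W_0| = |T_0| = \varphi_d(k)$, which is precisely the standalone leading summand in the claimed recurrence; for $1\leq i\leq d-1$ the term is $\binom{k}{i}\varphi_{d-i}(k)$. Summing these contributions gives
\[
\varphi_d(2k)\leq |W| \leq \sum_{i=0}^{d-1}|W_i| \leq \varphi_d(k) + \sum_{i=1}^{d-1}\binom{k}{i}\varphi_{d-i}(k),
\]
which is exactly the inequality in the statement.

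I do not anticipate a real obstacle, since all the substantive content — checking that $W$ blocks every $d$-partition — is already handled in the text. The only subtle point meriting a remark is why the index $i$ never reaches $d$: if all $d$ parts of a $d$-partition of $A\cup B$ were contained in $A$, then $B$ would be empty, contradicting $|B|=k\geq 1$. Hence every $d$-partition is accounted for by some $W_i$ with $0\leq i\leq d-1$, so no $W_d$ term appears in either the construction or the count, and the empty-sum convention makes the recurrence valid even in the degenerate case $d=1$.
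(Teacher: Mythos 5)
Your proposal is correct and follows essentially the same route as the paper: both count $|W|$ by bounding each $|W_i|$ as $\binom{k}{i}\varphi_{d-i}(k)$ and isolating the $i=0$ term as $\varphi_d(k)$. The extra remarks on injectivity and on why $i$ never reaches $d$ are harmless additions to the same argument.
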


\begin{proof}
    Note that if $n<d$, we trivially have $\varphi_d(n)=0$. In the above construction, if $T_i$ has the smallest possible size, namely $\varphi_{d-i}(k)$, then $W_i$ has size $\binom{k}{i} \cdot \varphi_{d-i}(k)$. The size of the blocking set $W$ is an upper bound on $\varphi_d(2k)$, and $|W|=\sum_{i=0}^{d-1}=\varphi_d(k) + \sum_{i=1}^{d-1}\binom{k}{i}\varphi_{d-i}(k).$
\end{proof}

To deal with the case where $n$ is not even, we use the following recurrence:

\begin{lemma}
\label{recurrence1}
    For every $d$ and $n> d$, we have $\varphi_d(n)\leq \varphi_{d-1}(n-1) + \varphi_d(n-1)$.
\end{lemma}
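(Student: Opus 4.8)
The plan is to single out one vertex and reduce the blocking of $d$-partitions of the $n$-set to blocking partitions of an $(n-1)$-set, branching on how the distinguished vertex behaves. Fix any $v\in V$ and write $V'=V\setminus\{v\}$, a set of $n-1$ elements. Let $\mathcal{D}$ be a minimum $d$-blocking set of $V'$, so $|\mathcal{D}|=\varphi_d(n-1)$, and let $\mathcal{D}'$ be a minimum $(d-1)$-blocking set of $V'$, so $|\mathcal{D}'|=\varphi_{d-1}(n-1)$. I would then define
\[
\mathcal{E}=\mathcal{D}\cup\{\tau\cup\{v\}:\tau\in\mathcal{D}'\}.
\]
Every set in $\mathcal{D}$ is a $d$-subset of $V'$, and every set $\tau\cup\{v\}$ is a $d$-subset of $V$ containing $v$, so $\mathcal{E}\subseteq\binom{V}{d}$ and $|\mathcal{E}|\leq\varphi_d(n-1)+\varphi_{d-1}(n-1)$.

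The remaining task is to verify that $\mathcal{E}$ is a $d$-blocking set by checking an arbitrary $d$-partition $\{X_1,\dots,X_d\}$ of $V$. The vertex $v$ lies in exactly one part, say $X_d$ after relabeling, and the argument splits according to whether $X_d=\{v\}$. If $X_d=\{v\}$, then $X_1,\dots,X_{d-1}$ are nonempty, pairwise disjoint subsets of $V'$ whose union is $V'$, that is, a $(d-1)$-partition of $V'$. Since $\mathcal{D}'$ blocks it, some $\tau\in\mathcal{D}'$ meets each of $X_1,\dots,X_{d-1}$, and then $\tau\cup\{v\}\in\mathcal{E}$ meets all of $X_1,\dots,X_d$. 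If instead $|X_d|\geq 2$, put $X_d'=X_d\setminus\{v\}$, which is nonempty, so $\{X_1,\dots,X_{d-1},X_d'\}$ is a $d$-partition of $V'$; since $\mathcal{D}$ blocks it, some $d$-tuple in $\mathcal{D}\subseteq\mathcal{E}$ meets each of $X_1,\dots,X_{d-1},X_d'$, and because $X_d'\subseteq X_d$ this same tuple blocks $\{X_1,\dots,X_d\}$.

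Either way the partition is blocked, so $\mathcal{E}$ is a $d$-blocking set and $\varphi_d(n)\leq|\mathcal{E}|\leq\varphi_d(n-1)+\varphi_{d-1}(n-1)$, which is the claim. (The hypothesis $n>d$ is what guarantees the parts in both branches can be taken nonempty, so that the partitions of $V'$ produced above are genuine.)

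There is no serious obstacle here; once one commits to peeling off a single vertex, the construction is essentially forced, and the only point requiring care is the case split. The substantive observation is that the two roles $v$ can play---being a singleton part versus sharing a part with other vertices---correspond exactly to the two ingredients $\mathcal{D}'$ and $\mathcal{D}$: a singleton part $\{v\}$ needs a witness among the remaining $d-1$ parts, supplied by the $(d-1)$-blocking set $\mathcal{D}'$ together with $v$ in the last coordinate, whereas a nonsingleton part containing $v$ can be shrunk to its intersection with $V'$ and handled by a genuine $d$-blocking set of $V'$. I would take particular care in the $|X_d|\geq 2$ case to invoke $X_d'\subseteq X_d$, so that a tuple meeting $X_d'$ automatically meets $X_d$.
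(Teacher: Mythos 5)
Your proposal is correct and follows essentially the same approach as the paper: single out one vertex $v$, use a $(d-1)$-blocking set of $V\setminus\{v\}$ with $v$ added to each member to handle partitions where $\{v\}$ is a singleton part, and a $d$-blocking set of $V\setminus\{v\}$ to handle the rest. Your write-up merely spells out the case analysis in more detail than the paper does.
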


\begin{proof}
    Select a point $x$ from the underlying set. Let $\mathcal{A}$ be a $(d-1)$-blocking set on the underlying set $[n]\setminus \{x\}$. If we add $x$ to each set in $\mathcal{A}$, the resulting family $\mathcal{A}'$ blocks all $d$-partitions where one of the parts is equal to $\{x\}$. Let $\mathcal{B}$ be a $d$-blocking set on the underlying set $[n]\setminus \{x\}$. This set also blocks all partitions of $[n]$ where $\{x\}$ is not a part of its own. Together, $\mathcal{A}'\cup \mathcal{B}$ is a $d$-blocking set of $[n]$ of size at most $\varphi_{d-1}(n-1) + \varphi_d(n-1)$.
\end{proof}

%Several other nice recurrences and inequalities can be obtained easily, but the one presented in Lemma~\ref{recurrence1} is the only one that we ended up using.
% *** Do we need monotonicity in n anywhere?

Now let us use the recurrence~(\ref{formula3}) to get an upper bound on $\varphi_d(n)$. We would like to find some constant $\gamma_d$ (dependent on $d$) such that, essentially, $\varphi_d(n)\leq \gamma_d n^{d-1}$. Since $\varphi_1(n)=1$, $\varphi_2(n)=n-1$ and $\varphi_3(n)=\lceil \frac{n(n-1)}{3}\rceil$, let us set $\gamma_1=\gamma_2=1$, and $\gamma_3=\frac{1}{3}$.

With a bit of foresight, for $d\geq 4$, we set
\[
 \gamma_d= \frac{1}{2^{d-1} -1} \cdot \sum_{i=1}^{d-1} \frac{\gamma_{d-i}}{i!}.
\]

%Finally, let us define constants $\gamma_d$ recursively as follows: $\gamma_1=\gamma_2=1$, $\gamma_3=\frac{1}{3}$, and for $d>3$ set

\begin{theorem}
    For each $d$, there is a constant $\beta_d$ such that for every $n$ we have
    \[ \varphi_d(n) \leq \gamma_d \cdot n^{d-1} +\beta_d \cdot n^{d-2}.
    \]
\end{theorem}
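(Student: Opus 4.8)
The plan is a double induction: an outer induction on the dimension $d$ and, for each fixed $d$, an inner strong induction on $n$. The outer base cases $d\in\{1,2,3\}$ follow at once from the exact values recorded earlier, namely $\varphi_1(n)=1$, $\varphi_2(n)=n-1$, and $\varphi_3(n)=\lceil n(n-2)/3\rceil$, each of which gives the claimed bound with the declared $\gamma_d$ and any sufficiently large $\beta_d$ (in fact $\beta_d=0$ works in all three). So I would fix $d\ge 4$, assume the statement for every smaller dimension with constants $\beta_1,\dots,\beta_{d-1}$ already chosen, and, after possibly enlarging them, assume $\beta_m\ge 0$ for all $m<d$.

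For the inner induction I would feed the two–part construction underlying recurrence~(\ref{formula3}) with \emph{unbalanced} parts. The blocking argument given there never uses $|A|=|B|$, so splitting $V$ into parts of sizes $a=\lfloor n/2\rfloor$ and $b=\lceil n/2\rceil$ yields
\[
 \varphi_d(n)\ \le\ \varphi_d(b)+\sum_{i=1}^{d-1}\binom{a}{i}\varphi_{d-i}(b),
\]
which specializes to recurrence~(\ref{formula3}) when $n$ is even and disposes of odd $n$ in the same stroke (and $b<n$ for $n\ge 2$, so the inner hypothesis applies to $\varphi_d(b)$). Substituting the inner hypothesis for $\varphi_d(b)$ and the outer hypothesis for each $\varphi_{d-i}(b)$, and bounding $\binom{a}{i}\le a^i/i!\le (n/2)^i/i!$, the coefficient of $n^{d-1}$ on the right becomes $2^{-(d-1)}\bigl(\gamma_d+\sum_{i=1}^{d-1}\gamma_{d-i}/i!\bigr)$. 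The defining relation $\gamma_d(2^{d-1}-1)=\sum_{i=1}^{d-1}\gamma_{d-i}/i!$ is exactly the identity that collapses this to $\gamma_d$, so the $n^{d-1}$ terms reproduce $\gamma_d n^{d-1}$ precisely and the whole statement comes down to the second–order term.

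It remains to show that the $O(n^{d-2})$ discrepancy is absorbed by a single constant $\beta_d$. Here I would expand each factor to second order, using $\binom{a}{i}=a^i/i!-\binom{i}{2}a^{i-1}/i!+\cdots$ together with the $a,b=\tfrac{n}{2}\pm\tfrac12$ corrections in the odd case, collect the coefficient of $n^{d-2}$, and compare it with the target coefficient $\beta_d$. After simplification this reduces to a single inequality of the shape $(2^{d-2}-1)\,\beta_d\ge C_d$, where $C_d$ is a fixed constant assembled from $\gamma_1,\dots,\gamma_d$ and $\beta_1,\dots,\beta_{d-1}$. Because $2^{d-2}-1>0$ for every $d\ge 3$, this can be solved for $\beta_d$; enlarging $\beta_d$ still further to cover the finitely many small $n$ below a threshold (where the neglected $n^{d-3}$ terms could otherwise interfere) then closes the induction.

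I expect the main obstacle to be exactly this second–order bookkeeping: it is the strict positivity of the "budget" $2^{d-2}-1$ in the $\beta_d n^{d-2}$ term that makes a constant $\beta_d$ possible at all, and the odd case contributes genuinely extra $n^{d-2}$ terms because $a\ne b$. This is also why I would route the odd case through the balanced split rather than through Lemma~\ref{recurrence1}: the latter adds a full $\gamma_{d-1}n^{d-2}$ while saving only about $(d-1)\gamma_d n^{d-2}$, so it fails to preserve the bound term by term once $\gamma_{d-1}>(d-1)\gamma_d$, which already occurs at $d=5$, where $\gamma_4=\tfrac{1}{7}>\tfrac{4\cdot 29}{840}=4\gamma_5$.
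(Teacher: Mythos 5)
Your argument is correct, and it rests on the same two pillars as the paper's proof --- the recurrence~(\ref{formula3}) and the defining identity $\gamma_d(2^{d-1}-1)=\sum_{i=1}^{d-1}\gamma_{d-i}/i!$ --- but it solves the recurrence by a genuinely different mechanism. The paper treats odd $n$ by one application of Lemma~\ref{recurrence1} (paying an extra $\varphi_{d-1}(n-1)$) and then, instead of an inner induction on $n$, unrolls $\varphi_d(n)\le\varphi_d(\lfloor n/2\rfloor)+\cdots$ down to constant size and sums geometric series: the factor $\frac{1}{2^{d-1}-1}$ in $\gamma_d$ arises as $\sum_{j\ge1}2^{-j(d-1)}$, and the error terms sum to $\frac{2^{d-2}}{2^{d-2}-1}\,c_d\,n^{d-2}$. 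You instead split into parts of sizes $\lfloor n/2\rfloor$ and $\lceil n/2\rceil$ (valid, since the construction behind~(\ref{formula3}) indeed never uses $|A|=|B|$) and close a strong inner induction on $n$; your budget $(2^{d-2}-1)\beta_d\ge C_d$ is exactly the paper's geometric factor viewed from the other side. Your route dispenses with Lemma~\ref{recurrence1} entirely and makes the second-order bookkeeping more explicit, at the cost of having to expand $\binom{a}{i}$ and $b^{d-1}$ to second order; both are sound. One correction to your closing remark: the inequality $(d-1)\gamma_d\ge\gamma_{d-1}$ is only needed if one decrements $n$ by one and compares the bound for $n$ against the bound for $n-1$ term by term. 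In the paper's scheme the cost $\gamma_{d-1}(n/2^j)^{d-2}$ of Lemma~\ref{recurrence1} is incurred once per halving level and absorbed into the $O(n^{d-2})$ error (and the same absorption would work inside your induction if the decrement were immediately followed by the halving), so that route does not in fact fail at $d=5$.
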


\begin{proof}
    We use induction on $d$. For $d=1$, we have $\varphi_d(n)=1=\gamma_1\cdot n^0$, for $d=2$ we have $\varphi_2(n)=n-1=\gamma_2\cdot n^1$ and for $d=3$ Theorem \ref{thm:main3} %\ref{thm:main3k}, \ref{theorem:main3k+1}, \ref{theorem:main3k+2}
    tells us that $\varphi_3(n) \leq \frac{n^2}{3}$, so $\beta_3=0$. Now let us suppose that the statement holds up to $d-1$ (for all $n$), and let us prove that it holds for $d$ (for all $n$). We want to estimate $\varphi_d(n)$ for a fixed $n$.

    If $n$ is odd, we use the recurrence from Lemma~\ref{recurrence1}. We get
    \begin{equation}\label{indbound}
        \varphi_d(n)\leq %\varphi_d(n-1)+\varphi_{d-1}(n-1)\leq
        \varphi_d\left(2\cdot \left\lfloor \frac{n}{2} \right\rfloor \right)+\varphi_{d-1}(n-1).
    \end{equation}
    If $n$ is even, then $\varphi_d(n)=\varphi_d\left(2\cdot \lfloor \frac{n}{2} \rfloor \right)$, so the bound~(\ref{indbound}) holds as well. Using the bound from~(\ref{formula3}) with $k= \lfloor \frac{n}{2} \rfloor$ and the  inductive hypothesis, we get
    %$\floor[\Big]{\frac{\floor{\frac{n}{2}}}{2}}$

    \begin{align*}
        \varphi_d(n) &\leq \varphi_d \left( \left\lfloor \frac{n}{2} \right\rfloor \right)
            + \left(\sum_{i=1}^{d-1}\binom{ \left\lfloor \frac{n}{2} \right\rfloor  }{i} \cdot \varphi_{d-i}\left( \left\lfloor\frac{n}{2} \right\rfloor  \right)\right) +\varphi_{d-1}(n-1)\\
            &\leq \varphi_d \left( \left\lfloor \frac{n}{2} \right\rfloor  \right)
            + \left\lfloor \frac{n}{2} \right\rfloor ^{d-1} \cdot \left(\sum_{i=1}^{d-1} \frac{\gamma_{d-i}}{i!}\right)
            + \left\lfloor \frac{n}{2} \right\rfloor ^{d-2} \cdot \left(\sum_{i=1}^{d-1} \frac{\beta_{d-i}}{i!}\right) \\
            &+ n^{d-2}\cdot \gamma_{d-1} + n^{d-3}\cdot \beta_{d-1}\\
            &\leq \varphi_d \left( \left\lfloor \frac{n}{2} \right\rfloor \right)
            + {\frac{n}{2}}^{d-1} \cdot \left(\sum_{i=1}^{d-1} \frac{\gamma_{d-i}}{i!}\right) + n^{d-2}\cdot c_d
    \end{align*}
where $c_d$ is some constant that combines several other numbers, including the constants $\beta_1,\dots,\beta_{d-1}$ provided by the induction hypothesis.
    Rewriting the first term in an analogous way multiple times, we eventually get
    \begin{align*}
        \varphi_d(n) &\leq n^{d-1} \left(\sum_{i=1}^{d-1} \frac{\gamma_{d-i}}{i!}\right)\cdot \left(\frac{1}{2^{d-1}}+ \frac{1}{(2^{d-1})^2}+\frac{1}{(2^{d-1})^3}+\dots\right) \\
        &+ n^{d-2}\cdot c_d \cdot \left(1+\frac{1}{2^{d-2}}+ \frac{1}{(2^{d-2})^2}+\dots\right)\\
        &\leq \frac{n^{d-1}}{2^{d-1}-1} \sum_{i=1}^{d-1} \frac{\gamma_{d-i}}{i!}
        + n^{d-2}\cdot \frac{2^{d-2}}{2^{d-2}-1}\cdot c_d
    \end{align*}
The first term is equal to $n^{d-1}\cdot \gamma_d$, and the second is a constant multiple of $n^{d-2}$.
\end{proof}

Now we have an upper bound in terms of some number $\gamma_d$. This number is defined inductively, but it is not clear on the first sight how large it is. It is not hard to see, for instance using induction and the binomial theorem, that $\gamma_d\leq \frac{1}{(d-1)!}$, so our new bound is no worse than the trivial upper bound~(\ref{eq:trivub}). The following theorem shows that it is strictly better.

\begin{theorem}
    For all $d\geq 3$, we have $\gamma_d<\frac{0.86}{(d-1)!}$.
\end{theorem}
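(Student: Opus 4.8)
The plan is to normalize the inductively defined quantity so that the target constant becomes visible. Set $\delta_d = (d-1)!\,\gamma_d$; the claim $\gamma_d < \tfrac{0.86}{(d-1)!}$ is then equivalent to $\delta_d < 0.86$. Multiplying the defining recurrence by $(d-1)!$ and using $\gamma_{d-i} = \delta_{d-i}/(d-i-1)!$ together with the identity $\tfrac{(d-1)!}{i!\,(d-i-1)!} = \binom{d-1}{i}$, I would rewrite it (after reindexing by $j = d-i$) as
\[
\delta_d = \frac{1}{2^{d-1}-1}\sum_{j=1}^{d-1}\binom{d-1}{j-1}\,\delta_j .
\]
The base values are $\delta_1 = \delta_2 = 1$, $\delta_3 = \tfrac{2}{3}$, and a direct evaluation gives $\delta_4 = \tfrac{1}{7}\bigl(1 + 3 + 3\cdot\tfrac23\bigr) = \tfrac{6}{7}\approx 0.857$, which turns out to be the worst case and already sits just below $0.86$.

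The naive attempt---bound every $\delta_j$ by a single constant $C$ and feed it into the recurrence---fails, and isolating why is the crux. Since $\sum_{j=1}^{d-1}\binom{d-1}{j-1} = 2^{d-1}-1$ exactly, a uniform bound $\delta_j \le C$ reproduces only $\delta_d \le C$, and the presence of $\delta_1 = \delta_2 = 1$ forces $C \ge 1$, far above the target. The fix is to treat the small-index terms honestly: I would prove by strong induction that $\delta_d \le \tfrac{6}{7}$ for all $d \ge 4$, keeping $\delta_1, \delta_2, \delta_3$ at their exact values and applying the bound $\delta_j \le \tfrac{6}{7}$ only to the indices $4 \le j \le d-1$. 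Splitting the sum accordingly and using $\sum_{m=3}^{d-2}\binom{d-1}{m} = 2^{d-1} - d - 1 - \binom{d-1}{2}$ reduces the induction step (valid for $d \ge 5$, with $d=4$ serving as a computed base case) to a single polynomial inequality.

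The main obstacle is then purely this bookkeeping: after collecting the terms $\tfrac{6}{7}\,2^{d-1} + \tfrac17 d - \tfrac{4}{21}\binom{d-1}{2} - \tfrac{6}{7}$, the requirement $\delta_d \le \tfrac{6}{7}$ becomes $\tfrac{d}{7} \le \tfrac{2(d-1)(d-2)}{21}$, i.e.
\[
0 \le 2d^2 - 9d + 4 = (d-4)(2d-1),
\]
which holds for every $d \ge 4$, with equality exactly at $d=4$ (explaining why $\delta_4 = \tfrac{6}{7}$ is extremal). This closes the induction, giving $\delta_d \le \tfrac{6}{7}$ for all $d \ge 4$, while $\delta_3 = \tfrac{2}{3}$ is handled directly. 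Finally, since $\tfrac{6}{7} = 0.857\ldots < 0.86$, dividing back by $(d-1)!$ yields $\gamma_d = \delta_d/(d-1)! \le \tfrac{6/7}{(d-1)!} < \tfrac{0.86}{(d-1)!}$ for all $d \ge 3$, as required.
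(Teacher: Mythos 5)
Your proof is correct and follows essentially the same strategy as the paper: induction on $d$ via the recurrence, the binomial identity $\sum_m\binom{d-1}{m}=2^{d-1}$, and an exact treatment of the small indices $1,2,3$ whose deficit/excess reduces the induction step to a quadratic inequality in $d$ (your $(d-4)(2d-1)\ge 0$ plays the role of the paper's $-29d_0^2+129d_0-58\le 0$). Your normalized version is in fact slightly sharper, proving $\gamma_d\le\frac{6/7}{(d-1)!}$ and identifying $d=4$ as the extremal case.
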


\begin{proof}
    We proceed by induction on $d$. For $d=3$, the inequality holds by definition of $\gamma_3=1/3$. Now suppose that we have some $d_0\geq 4$ and assume that the inequality is true for all $d$ with $3\leq d\leq d_0-1$. We have

    \begin{eqnarray*}
        &\gamma_{d_0}&= \frac{1}{2^{d_0 -1}}\left( \sum_{i=1}^{d_0-4} \frac{\gamma_{d-i}}{i!}+ \frac{\gamma_3}{(d-3)!} + \frac{\gamma_2}{(d-2)!} +\frac{\gamma_1}{(d-1)!}\right) \\
        &&\leq\frac{1}{2^{d_0 -1}}\left( \sum_{i=1}^{d_0} \frac{0.86}{i!(d_0-1-i)!}\right)+ \frac{(-29d_0^2+129d_0-58)}{2^{d_0 -1}\cdot(d_0-1)!\cdot 300}
    \end{eqnarray*}
    We use the binomial theorem to rewrite the first summand as $\frac{0.86}{(d_0 -1)!}$. If $d_0\geq 4$, the quadratic expression is negative, so the second summand may be omitted.
\end{proof}

%The above construction splits the vertex set into two almost equal parts and for each $i$, it contains sets that have $i$ points in the left part, complemented with $(d-i)$-element sets that were constructed recursively in the right part. The process ends when $d-i$ is either $1$, $2$, or $3$, by placing the known constructions for singletons, pairs, or triples in the right part. If, for $d-i=3$, we do one more step of the process, instead of placing a construction of triples, we get a construction which can be described much more simply: it consists of the last point, supplemented with any of the $(d-1)$-element sets

%%%%%%%%%%%%%%%%%%%%%%%%%%%%%%%%%%%%%%%%%%%%%%%%%%%555
In the above construction, the recursive process stops when we have either $1$, $2$, or $3$ points left, and at that point we use a known (optimal) construction for singletons, pairs, or triples. Some reflection reveals that if, instead of using our construction for triples, we take the recursive process one step further and stop only when we have $1$ or $2$ points left, we get a blocking set of size $\binom{n-1}{d-1}$. It is not too hard to see that this blocking set is larger than the one in our construction above (confirming in another way that $\gamma_d\leq \frac{1}{(d-1)!}$). So our improvement stems from better initialization. At the same time, this makes it clear that our construction is not optimal.

%Analyzing the sets that we get from this It is clear that

\section{The subgraphs induced by two of the \texorpdfstring{$A_i$}{}'s}\label{sec:twoAi}

Let us now go back to the proof of Theorem~\ref{thm:main3}. In Section~\ref{sec:construction3}, we described three constructions, according to the congruence class of $n$ modulo~3. As we mentioned, we will use Theorem~\ref{thm:characterization} to verify that in each of the three cases, for each $X\subseteq V$, the graph $G_{\mathcal{E}}(X)$ is connected.

This graph consists of some {\em red edges} that come from the red triples, and some {\em blue edges} that come from the blue triples (with some edges being possibly of both colors). % může to fakt nastat?
In the next two sections, we analyze what the subgraph with the vertex set $(A_0\cup A_1\cup A_2)\setminus X$ and with the red edges looks like. We call this the {\em red subgraph}.

Let us go back to two $k$ elements sets $A=\{a_0,\dots,a_{k-1}\}$, $B=\{b_0,\dots,b_{k-1}\}$, and the hypergraph $\mathcal{H}(A,B)$.

\begin{lemma}\label{lemma:connectedA}
Let $X\subseteq A\cup B$ be a set that has a nonempty intersection with $B$.
\begin{enumerate}
    \item[$(i)$.] If $A \cap X =\emptyset$, then the subgraph of $G_{\mathcal{H}(A,B)}(X)$ induced by $A$ is connected. Also, in this case there are no edges of $G_{\mathcal{H}(A,B)}(X)$ going from $A$ to $B$.
    \item[$(ii)$.] If  $B\cap X=B$, then the subgraph of $G_{\mathcal{H}(A,B)}(X)$ induced by $A\setminus X$ is connected.
\end{enumerate}
That is, in both cases, the subgraph of $G_{\mathcal{H}(A,B)}(X)$ induced by $A\setminus X$ is connected.
\end{lemma}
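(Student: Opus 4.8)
The plan is to translate everything into the additive group $\mathbb{Z}_k$ by identifying each vertex $a_i$ (and $b_i$) with its index $i$, and then to read off precisely which pairs are edges of $G_{\mathcal{H}(A,B)}(X)$. The crucial structural fact, which I would record first, is that every triple of $\mathcal{H}(A,B)$ has exactly two vertices in $A$ and exactly one in $B$. This single observation already disposes of two of the three claims.

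First I would handle the easy parts. For the ``no edges from $A$ to $B$'' assertion in $(i)$: an edge $\{a_i,b_m\}$ of $G_{\mathcal{H}(A,B)}(X)$ would require some $x\in X$ with $\{x,a_i,b_m\}\in\mathcal{H}(A,B)$, but $A\cap X=\emptyset$ forces $X\subseteq B$, so $x$ would be a second $B$-vertex of the triple, contradicting the two-$A$-one-$B$ structure. For part $(ii)$, where $B\subseteq X$, I would observe that for any two distinct $a_i,a_j\in A\setminus X$ the vertex $b_{i+j}\in X$ witnesses the triple $\{a_i,a_j,b_{i+j}\}\in\mathcal{H}(A,B)$, so $\{a_i,a_j\}$ is an edge; hence the subgraph induced on $A\setminus X$ is complete, and therefore connected. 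So $(ii)$ is essentially immediate.

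The real content is the connectivity in $(i)$. Here $X\subseteq B$, so I would write $S=\{m : b_m\in X\}$, which is nonempty because $X$ meets $B$. Reading off the definition of $\mathcal{H}(A,B)$, the pair $\{a_i,a_j\}$ with $i\neq j$ is an edge of $G_{\mathcal{H}(A,B)}(X)$ exactly when $b_{i+j}\in X$ or $b_{i+j+1}\in X$, i.e. when $i+j\in T$, where $T:=S\cup(S-1)$. The goal becomes: the graph on $\mathbb{Z}_k$ with edge set $\{\{i,j\} : i\neq j,\ i+j\in T\}$ is connected. I would reduce this to showing that $a_i$ and $a_{i+1}$ lie in one component for every $i$, since that chains all vertices together; and I would fix any $s\in S$, so that $t:=s-1$ and $t+1=s$ are two consecutive values lying in $T$ to work with.

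The key step, and the main obstacle, is connecting $a_i$ to $a_{i+1}$ uniformly in $i$. The natural attempt is the length-two path $a_i - a_{t-i} - a_{i+1}$, whose two pairs have index-sums $t$ and $t+1$, both in $T$; this works provided the intermediate vertex is distinct from its neighbours, i.e. provided $2i\neq t$ and $2i\neq t-1\pmod k$. The obstacle is exactly the degenerate residues $2i\equiv t$ or $2i\equiv t-1$, whose existence and number depend on the parity of $k$. The point I would exploit to avoid any parity case analysis is that in precisely these degenerate cases the \emph{direct} edge $\{a_i,a_{i+1}\}$ is already present: its index-sum is $2i+1$, which equals $t+1$ when $2i\equiv t$ and equals $t$ when $2i\equiv t-1$, so $2i+1\in T$ in both cases. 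Thus for every $i$ either the length-two path or the direct edge connects $a_i$ to $a_{i+1}$, and the whole of $A$ forms a single component (treating $k=1$ as a trivial separate case). Combining this with the easy parts yields the unified conclusion that the subgraph induced by $A\setminus X$ is connected in both cases.
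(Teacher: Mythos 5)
Your proof is correct, and it rests on the same key mechanism as the paper's: a single blocked vertex $b_s\in B\cap X$ generates exactly the edges $\{a_p,a_q\}$ with $p+q\in\{s-1,s\}$ (more generally $p+q\in S\cup(S-1)$ in your notation), and these already connect all of $A$ when $A\cap X=\emptyset$. Where you diverge is in how connectivity is verified. The paper exhibits two explicit zig-zag paths, $a_s,a_0,a_{s-1},a_1,a_{s-2},\dots$ and $a_s,a_{k-1},a_{s+1},a_{k-2},\dots$, sharing the vertex $a_s$ and together covering $A$; you instead link each consecutive pair $a_i,a_{i+1}$ by the two-step path through $a_{t-i}$, falling back on the direct edge $\{a_i,a_{i+1}\}$ precisely in the degenerate residues $2i\equiv t$ or $2i\equiv t-1$, where the index-sum $2i+1$ conveniently lands back in $\{t,t+1\}$. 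Your version is somewhat more careful: the paper's ``two paths'' description glosses over the parity-dependent question of whether the zig-zag closes up or self-intersects, whereas your case split is airtight and uniform in $k$ (with $k=1$ set aside). You also supply an explicit argument for the ``no edges from $A$ to $B$'' assertion of part $(i)$ --- any such edge would force a triple with two vertices in $B$ --- which the paper states in the lemma but does not argue in its proof. Part $(ii)$ (completeness of the induced subgraph on $A\setminus X$ via $b_{i+j}\in X$) is identical in both.
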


\begin{proof}
 Let $b_i\in B\cap X$ and let us look at the edges of $G_{\mathcal{H}(A,B)}(X)$ that are there due to the triples that have two elements in $A$ and the third is this $b_i$. Since we have the triples $\{a_0,a_i,b_i\}$, $\{a_0,a_{i-1},b_i\}$, etc., we have two paths, $\{a_i,a_0,a_{i-1},a_1,a_{i-2},a_2,\dots\}$ and $\{a_i,a_{k-1},a_{i+1},\allowbreak a_{k-2},a_{i+2},a_{k-3},\dots\}$ that share the vertex $a_i$. If $X\cap A=\emptyset$, then the subgraph induced by $A$ is connected.

If $A\cap X\neq\emptyset$, then the vertices of $A$ that belong to $X$ could disconnect the path described above. In this case we use the fact that all vertices of $B$ belong to $X$. %But if $B\cap X=B$, the graph $G_{\mathcal{H}(A,B)}(X)$ is still connected: i
If $a_i$ and $a_j$ are two vertices of $A$ that are not in $X$, then since we have the triple $\{a_i,a_j,b_{i+j}\}$ and $b_{i+j}\in X$, $G_{\mathcal{H}(A,B)}(X)$ has the edge $\{a_i,a_j\}$, so the subgraph induced by $A\setminus X$ is complete.
\end{proof}

\begin{lemma}\label{lemma:connectedAB}
 Let $X\subseteq A\cup B$ and suppose that $A$ has some points that are in $X$, and $B$ has some points that are not in $X$. Then $G_{\mathcal{H}(A,B)}(X)$ is either connected or it has exactly two components. If $B\cap X=\emptyset$, then $G_{\mathcal{H}(A,B)}(X)$ is connected.

Suppose that $X\cap A=\{a_{j_1},a_{j_2},\dots\}$ for indices $j_1<j_2<\dots$. Suppose that it is the case that $G_{\mathcal{H}(A,B)}(X)$ has two components, say $I$ and $T$, and there is some $j_t$ such that $a_{j_t +1}\not \in X$ and $a_{j_t +1}\in I$. Then all of the following hold.
    \begin{itemize}
     \item For every $j_i$, if we take the vertex $a_{j_i}$ and the vertex $a_{j_{i+1}}$ that follows it in the (circular) sequence $a_{j_1},a_{j_2}\dots$ of vertices of $A\cap X$, the segment $\{a_{j_i +1},\dots,a_{j_{i+1} -1}\}$ (consisting of vertices that are not in $X$) has two non-empty parts: first a segment $\{a_{j_i +1},\dots \}$ consisting of vertices of $I$, followed by a segment $\{\dots,a_{j_{i+1}-1}\}$ consisting of vertices of $T$.
     \item Also, there is a vertex $v\in B$ that in $B$ immediately follows a vertex in $X$ and for which $v\in T$.
     \item And the component $I$ has some vertex of $B$. (It follows that both $I$ and $T$ have vertices of $B$.)
    \end{itemize}
\end{lemma}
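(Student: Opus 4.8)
The plan is to first pin down the edge structure of $G:=G_{\mathcal{H}(A,B)}(X)$ and then run a connectivity analysis along the cyclic order of the indices, complementing the case $A\cap X=\emptyset$ already handled in Lemma~\ref{lemma:connectedA}. Every triple of $\mathcal{H}(A,B)$ has exactly two vertices in $A$ and one in $B$, so an edge of $G$ arises in one of two ways: from a removed apex $b_m\in B\cap X$, giving an $A$--$A$ edge $\{a_i,a_j\}$ with $i+j\in\{m-1,m\}\pmod{k}$; or from a removed apex $a_m\in A\cap X$, giving an $A$--$B$ edge $\{a_t,b_{m+t}\}$ or $\{a_t,b_{m+t+1}\}$. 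In particular $G$ has \emph{no} $B$--$B$ edge, so the component structure is carried by the surviving $a$'s, with every surviving $b$ hanging off its $a$-neighbours (we take $A\setminus X\neq\emptyset$; the case $A\subseteq X$ leaves $G$ edgeless and must be excluded). The key local fact is the \emph{unit link}: for any apex $a_{j_s}\in X$, the vertex $b_{t+j_s+1}$ is a common neighbour of $a_t$ and $a_{t+1}$, so two consecutive surviving $a_t,a_{t+1}$ lie in one component unless $b_{t+j_s+1}\in X$ for \emph{every} apex $s$.

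From here I would first dispose of the case $B\cap X=\emptyset$. Then no $b$ is removed, the unit link is never obstructed, and every pair of consecutive surviving $a$'s is connected; the removed $a$'s cut $\mathbb{Z}_k$ into arcs, each of which is therefore connected, while the longer shifts coming from the remaining apexes (or the single arc when $\lvert A\cap X\rvert=1$) bridge the arcs. Using that all of $B$ survives, a direct argument glues the arcs into one component, proving connectedness. This simultaneously shows that, in the two-component case, necessarily $B\cap X\neq\emptyset$, which is exactly what makes the vertex $v$ of the second bullet well defined.

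For the general statement I would lay the indices on the cycle $\mathbb{Z}_k$, mark the removed $a$'s at $j_1<\dots<j_p$ and the blocks of removed $b$'s, and track how component membership propagates as $t$ increases. Because the two triple families $\{a_i,a_j,b_{i+j}\}$ and $\{a_i,a_j,b_{i+j+1}\}$ differ only by the shift $b_{i+j}\mapsto b_{i+j+1}$, both the obstructions to the unit link (runs of removed $b$'s) and the bridging edges carry a consistent orientation; I would label a surviving $a_t$ by which side of the current block of removed $b$'s it sits on. Pushing this bookkeeping once around the whole cycle should show that at most two labels ever occur, giving at most two components, and that within each arc $\{a_{j_i+1},\dots,a_{j_{i+1}-1}\}$ the label switches exactly once, from $I$ to $T$, which is the first bullet. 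The same orientation places, just after each block of removed $b$'s, a surviving $b$ in $T$ (the second bullet), and exhibits a surviving $b$-neighbour of $a_{j_t+1}$ lying in $I$ (the third bullet, whence both $I$ and $T$ meet $B$).

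The hard part will be precisely this global bookkeeping. The unit links, the long shifts from the other apexes, and the reflection edges $\{a_i,a_j\}$ from removed $b$'s each merge vertices, and I must prove that their combined effect never manufactures a third component and never reverses the $I$-before-$T$ order inside an arc; concretely, that the gluing from long shifts and reflections exactly compensates the cuts produced by the blocks of removed $b$'s, leaving precisely two ``intervals'' on the cyclic order. I expect this to require a careful case analysis according to the relative positions of the removed $a$'s and the blocks of removed $b$'s, together with separate treatment of the degenerate configurations (empty arcs, a surviving $b$ with no surviving $a$-neighbour, and the excluded case $A\subseteq X$). This is what makes the full argument long and technical.
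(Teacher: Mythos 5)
Your setup is sound: the classification of edges ($A$--$A$ edges from removed $b$'s, $A$--$B$ edges from removed $a$'s, no $B$--$B$ edges), the unit-link observation, and the treatment of the case $B\cap X=\emptyset$ all match the actual structure of $G_{\mathcal{H}(A,B)}(X)$, and your overall strategy (tracking component membership along the cyclic order) is the same as the paper's. But the proposal stops exactly where the proof has to start: you write that pushing the bookkeeping around the cycle ``should show that at most two labels ever occur,'' and then concede that proving the gluing ``exactly compensates the cuts'' is the hard part, requiring a case analysis you have not carried out. The missing idea is the one that makes the whole lemma work: for any two removed vertices $a_{j_r},a_{j_s}\in A\cap X$ whose cyclic successors $a_{j_r+1},a_{j_s+1}$ survive, the vertex $b_{j_r+j_s+2}$ is a common neighbour of both successors (via the triples $\{a_{j_r},a_{j_s+1},b_{j_r+j_s+2}\}$ and $\{a_{j_r+1},a_{j_s},b_{j_r+j_s+2}\}$), and if instead $b_{j_r+j_s+2}\in X$ then the triple $\{a_{j_r+1},a_{j_s+1},b_{j_r+j_s+2}\}$ yields a direct edge. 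Hence \emph{all} surviving successor vertices lie in a single component $I$, and by the symmetric argument all surviving predecessors $a_{j_i-1}$ lie in a single component. Only when combined with the per-arc analysis (each arc between consecutive removed $a$'s splits into at most an initial piece attached to $a_{j+1}$ and one leftover terminal piece) does this cap the global number of components at two; your local labelling by ``which side of the current block of removed $b$'s'' a vertex sits on does not by itself prevent different arcs from contributing different components.

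Two further gaps. First, to bound the number of components of the whole graph you must rule out a surviving $b_s$ with no surviving $a$-neighbour (an isolated vertex would be a third component); you flag this as a ``degenerate configuration,'' but it needs an actual argument --- in the paper it is a propagation-to-contradiction showing such a $b_s$ would force $A\subseteq X$. Second, the third bullet ($I$ meets $B$) does not simply fall out of the bookkeeping; it needs the observation that for $a_j\in I$ preceded by a maximal run $a_{j-1},\dots,a_{j-t}\in X$, the triple $\{a_{j-t},a_j,b_{2j-t}\}$ either supplies a $B$-vertex in $I$ or an edge joining $I$ to $T$, a contradiction. Finally, note that the true global picture is not ``precisely two intervals on the cyclic order'': $I$ and $T$ each consist of many interleaved segments, one pair per arc, so that part of your intended invariant would have to be restated before it could be proved.
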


Here $I$ and $T$ stand for ``initial'' and ``terminal''. Actually, more can be proved in addition to the statements in the lemma: if it is the case that we have two components, there is some divisor $m$ of $k$ such that whether a vertex belongs to $I$, $T$, or $X$ depends only on its class modulo $m$. That is, the pattern of colors $I$, $T$, $X$ in the sequence $u_0,u_1,\dots$ is very regular looking. But we will not use this in our proof.

\begin{proof}
 Suppose that, as in the statement of the lemma, $X\cap A=\{a_{j_1},a_{j_2},\dots\}$ for indices $j_1<j_2<\dots$. Let $W$ be the set of those vertices $a_{j_i +1}$ that do not belong to $X$. We will prove that all vertices of $W$ are in the same component. Let $a_{j_r +1},a_{j_s +1}$ be two vertices in $W$. Since $\{a_{j_r},a_{j_s +1},b_{j_r + j_s +2}\}\in \mathcal{H(A,B)}$ and $a_{j_r}\in X$, the pair $\{a_{j_s +1},b_{j_r +j_s +2}\}$ is an edge of $G_{\mathcal{H(A,B)}}(X)$. Similarly, since $\{a_{j_r +1},a_{j_s},b_{j_r + j_s +2}\}\in \mathcal{H(A,B)}$ and $a_{j_s}\in X$, the pair $\{a_{j_r +1},b_{j_r +j_s +2}\}$ is an edge. So if $b_{j_r +j_s +2}\not\in X$, then we have a path of length~2 from $a_{j_r +1}$ to $a_{j_s+1}$ through this vertex. If $b_{j_r +j_s +2}\in X$, then since $\{a_{j_r +1}, a_{j_s +1},b_{j_r +j_s +2}\}\in \mathcal{H}$, $\{a_{j_r +1},a_{j_s +1}\}$ is an edge of $G_{\mathcal{H(A,B)}}(X)$. In either case, $a_{j_r +1}$ and $a_{j_s +1}$ are in the same component. Let $I$ be this component.

Now suppose that $a_j$ and $a_{\ell}$ with $j<\ell$ are consecutive vertices in $A\cap X$ (that is, $a_{j+1},a_{j+2},\dots, a_{\ell -1}$ are not in $X$). %, then $a_{j+1},a_{j+2},\dots,a_{\ell -1}$ are also in the same component.
If none of the vertices $b_{2j+2},b_{2j+3},\dots,b_{j+\ell -1}$ are in $X$, then we have a path $\{ a_{j+1},b_{2j+2},a_{j+2},\allowbreak b_{2j+3},\dots,b_{j+\ell -1},a_{\ell -1} \}$ connecting all these vertices to $a_{j+1}$, so they are all in $I$. This is because $a_j\in X$ and we have triples $\{a_j, a_{j+1},b_{2j+2}\}$, $\{a_j,a_{j+2},b_{2j+2}\}$, $\{a_j,a_{j+2},b_{2j+3}\}$, etc.

Now suppose that some of the vertices $b_{2j+2},b_{2j+3},\dots,b_{j+\ell -1}$ are in $X$, and this path is therefore cut into several shorter paths. If $b_{2j+t}\in X$ for some $t$, then we have the edge $\{a_{j+1},a_{j+t-1}\}$, since $\{a_{j+1},a_{j+t-1},b_{2j+t}\}\in \mathcal{H(A,B)}$. So for each of these shorter paths except for the very last one, the last of its vertices is in $I$ (and therefore all of its other vertices are, as well). The only shorter path between $a_{j+1}$ and $a_{\ell -1}$ that we haven't taken care of is the last one. And indeed, its vertices may be in a component different from $I$.

However, note that if $W'$ are all the vertices $a_{j_i -1}$ that are not in $X$, then all vertices in this set are in the same component. This may be proven completely analogously to the fact that the initial vertices $a_{j_i +1}$ are all in the same component (a few paragraphs above).

Now since
\begin{itemize}
 \item each segment of vertices between consecutive members of $X$ can be split into at most two subsegments (initial and terminal) with each of the two subsegments having vertices in the same component, and
 \item the set of initial vertices $W$ are all in the same component, and
 \item the set of terminal vertices $W'$ are all in the same component,
\end{itemize}
it follows that
\begin{itemize}
 \item either all vertices of $A$ that are not in $X$ are all in the same component,
 \item or each segment of vertices between consecutive members of $X$ can be split into two nonempty subsegments, the first having all vertices in the component $I$, and the second having all vertices in the component $T$.
\end{itemize}

Now let us prove the statement about the existence of a vertex in $B$ that immediately follows a vertex in $X$ and that is in $T$. Recall the setup from above, where $a_j$ and $a_{\ell}$ are two consecutive vertices in $A\cap X$. If none of $b_{2j+2},\dots,b_{j+\ell-1}$ are in $X$, then all vertices of $A\setminus X$ are in one component. The same conclusion holds if $b_{j+\ell} \in X$. So if we have two components, $I$ and $T$, then $b_{j+\ell}\not\in X$ and some of the vertices $b_{2j+2},\dots,b_{j+\ell -1}$ are in $X$. Let $b_{j+s}$ be the last such vertex. Then $a_s,b_{j+s+1},a_{s+1},b_{j+s+2},\dots,a_{\ell -1},b_{j+\ell}$ are all in $T$. In particular, $b_{j+s+1}$ is not in $X$ and it is in $T$, while following in $B$ immediately after the vertex $b_{j+s}$ that is in $X$.

We have shown that all vertices of $A\setminus X$ are in either one or two components of our graph. Now let us show that if $A$ contains some vertices that are not in $X$, then all vertices of $B\setminus X$ have edges to some vertices in $A\setminus X$. This will prove the first part of the lemma, namely that the graph as a whole has either one or two components.

Suppose for contradiction that there is a vertex $b_s$ in $B\setminus X$ that has no edges to $A\setminus X$. We know that $A\setminus X\neq \emptyset$, so let $j$ be such that $a_j\in X$. Now, since $a_j\in X$ and $\{a_j,a_{s-j-1},b_s\}\in \mathcal{H(A,B)}$, if $a_{s-j-1}\not\in X$, then $\{a_{s-j-1},b_s\}$ is an edge. It follows that $a_{s-j-1}\in X$. Similarly, since $\{a_{j+1},a_{s-j-1},b_s\}\in \mathcal{H(A,B)}$ and $a_{s-j-1}\in X$, we have $a_{j+1}\in X$, otherwise we would have the edge $\{a_{j+1},a_s\}$. But then also $a_{s-j-2}\in X$, etc, and eventually we conclude that all vertices $\{a_j,\dots,a_{s-j-1}\}$ are in $X$. Since also $a_{s-j}\in X$ (due to the triple $\{a_{j},a_{s-j},b_s\}$), we have $a_{j-1}\in X$ since $\{a_{j-1},a_{s-j},b_s\}\in \mathcal{H(A,B)}$. Then $a_{s-j+1}\in X$ as well, etc. We conclude that all vertices of $A$ belong to $X$, which is a contradiction with the assumption that $A\cap X\neq A$.
%***** Udělat bez rotace!

It remains to show that if our graph has two components, named $I$ and $T$ as above, then some vertex of $B$ belongs to $I$. Find $j$ such that $a_j\in I$ and $a_{j-1}\in X$. Let $t$ be an index such that $a_{j-1},a_{j-2},\dots,a_{j-t}\in X$ and $a_{j-t-1}\not\in X$. Then, by what we have shown above, $a_{j-t-1}\in T$. Since we have triple $\{a_{j-t},a_j,b_{2j-t}\}\in \mathcal{H}$ and $a_{j-t}\in X$, we have either $b_{2j-t}\not\in X$ and our graph has the edge $\{a_j,b_{2j-t}\}$, so $b_{2j-t}\in I\cap B$. Or $b_{2j-t}\in X$ and then $\{a_{j-t-1},a_j\}$ is an edge connecting the components $I$ and $T$, which is a contradiction.
\end{proof}

\section{The red subgraph}

The notation ($\mathcal{H}$, $A_i$ etc.) in the rest of the paper is the same as in the previous sections, in particular the section where we define the blocking set. When referring to the sets $A_0$, $A_1$, $A_2$ and writing, e.g., $A_{i+1}$, $A_{i+2}$, the indices of the sets are taken modulo~3.

%\begin{comment}

\begin{lemma}\label{lemma:connected123}
Suppose that $X\subset V$ and suppose that none of $A_0$, $A_1$, $A_2$ are subsets of $X$. If, for some $i$, $A_i\cap X\neq \emptyset$ and $A_{i+1}\cap X\neq \emptyset$, then the red graph is connected.
 %$G_{\mathcal{H}}(X)$ is connected.
\end{lemma}

\begin{proof}
 Suppose first that $A_{i+2}\cap X=\emptyset$. %Then by part (ii) of Lemma~\ref{lemma:connectedA} (with $A=A_2$ and $B=A_0$), subgraph of $G_{\mathcal{H}}(X)$ induced by $A_2$ is connected.  ... true, but not needed
 By Lemma~\ref{lemma:connectedAB}, the subgraph of $G_{\mathcal{H}}(X)$ induced by $(A_{i+1}\cup A_{i+2})\setminus X$ is connected, so all of the vertices of $A_{i+1}\setminus X$ (of which there are more than zero) belong to the same component.
 The graph $G_{\mathcal{H}(A_i,A_{i+1})}(X)$ may have two components, but by Lemma~\ref{lemma:connectedAB}, each of them has vertices in $A_{i+1}\setminus X$. But since all vertices of $A_{i+1}\setminus X$ are in the same component of %$G_{\mathcal{H}}(X)$,
 the red graph, they provide connection between the two components.

 Now suppose that $A_{i+2}\cap X\neq \emptyset$. Suppose first that the subgraph induced by $(A_{i+1}\cup A_2)\setminus X$ is connected, so all vertices of $(A_{i+1}\cup A_{i+2})\setminus X$ are in the same component $C$. If the graph $G_{\mathcal{H}(A_i,A_{i+1})}(X)$ has two components, each of them has vertices in $A_{i+1}\setminus X$, so all of $(A_i\cup A_{i+1})\setminus X$ are in $C$ as well. In other words, all vertices of $(A_i\cup A_{i+1}\cup A_{i+2})\setminus X$ are in the same component, i.e., our graph is connected. The same argument shows that if the subgraph induced by $(A_i\cup A_{i+1})\setminus X$ or $(A_i\cup A_{i+2})\setminus X$  are connected, then the whole graph is.

 Now suppose that the graph $G_{\mathcal{H}}(X)$ has two components, say $C_1$ and $C_2$. Due to the previous paragraphs, this can only happen if each of the
 graphs $G_{\mathcal{H}(A_0,A_1)}(X)$, $G_{\mathcal{H}(A_1,A_2)}(X)$, $G_{\mathcal{H}(A_2,A_0)}(X)$ have two components.
 Find $j$ such that $u_j\in X\cap A_0$ and $u_{j+1}\not \in X$ and without loss of generality, suppose $u_{j+1}\in C_1$. Then, by Lemma~\ref{lemma:connectedAB}, whenever we have a segment of vertices in the circular sequence $u_0,u_1,\dots,u_{k-1}$ that do not belong to $X$, the first few belong to $C_1$ and the rest in $C_2$ (so $C_1$ is the same as $I$ and $C_2$ is the same as $T$ for $G_{\mathcal{H}(A_0,A_1)}(X)$). By Lemma~\ref{lemma:connectedAB}, we have a vertex in $A_1$ that does not belong to $X$, immediately follows a vertex in $X$, and belongs to $T$, that is, $C_2$. So for $G_{\mathcal{H}(A_1,A_2)}(X)$, the roles of $C_1$ and $C_2$ as $I$ or $T$ are switched: in each maximal segment of $A_1\setminus X$, the first few are in $C_2$ and the rest in $C_1$. And for the same reason, if we consider $A_1\cup A_2$, the labels are switched again in $A_2$: in each maximal segment of vertices of $A_2\setminus X$, we have first vertices of $C_1$, followed by vertices in $C_2$. But then, considering the last pair $A_2\cup A_0$, in in each segment of $A_0$ we have first vertices of $C_2$, followed by vertices of $C_1$. But this is not the case: we have started with the assumption that we have vertices of $C_1$ followed by vertices of $C_2$.
 \end{proof}
%\end{comment}

\begin{lemma}
\label{lemma:connected1}
    Let $X$ be a subset of $V$ such that one of the following holds:
    \begin{enumerate}
        \item Two of the sets $A_0, A_1, A_2$ are subsets of $X$.
        \item There is only one of $A_0, A_1, A_2$ that is a subset of $X$, say $A_i$, and $A_{i+1} \cap X \neq \emptyset$.
    \end{enumerate}
    Then the red graph is connected.
    %graph $G_{\mathcal{H}} (X)$ is connected.
\end{lemma}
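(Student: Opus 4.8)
The plan is to split along the two hypotheses and, in each, to single out which of the three red hypergraphs $\mathcal{H}(A_0,A_1),\mathcal{H}(A_1,A_2),\mathcal{H}(A_2,A_0)$ does the work. I will use repeatedly that each $A_j$ is the two-vertex part of exactly one of them, namely $\mathcal{H}(A_j,A_{j+1})$, and the single-vertex part of exactly one, namely $\mathcal{H}(A_{j+2},A_j)$. The guiding principle is that a set lying entirely inside $X$ is very useful: by Lemma~\ref{lemma:connectedA}$(ii)$, if the single-vertex part of $\mathcal{H}(A,B)$ lies in $X$, then the surviving vertices $A\setminus X$ of the two-vertex part form a clique, hence one connected blob. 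Since it suffices to exhibit connectivity using only a subset of the red edges, I will always argue with such cliques and with the graphs $G_{\mathcal{H}(A,B)}(X)$, restricting $X$ to $A\cup B$; this does not change these graphs, because all triples of a single $\mathcal{H}(A,B)$ live inside $A\cup B$, so the two-set lemmas apply despite $X\subseteq V$ being larger.

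For the first hypothesis, say $A_{i+1}\subseteq X$ and $A_{i+2}\subseteq X$. Then every vertex outside $X$ lies in $A_i$, so the red subgraph is exactly the graph on $A_i\setminus X$. Applying Lemma~\ref{lemma:connectedA}$(ii)$ to $\mathcal{H}(A_i,A_{i+1})$ (with single-vertex part $B=A_{i+1}\subseteq X$) shows that $A_i\setminus X$ is complete, hence connected; and if moreover $A_i\subseteq X$, the red subgraph has no vertices and is connected vacuously. This settles hypothesis~1.

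For the second hypothesis, let $A_i\subseteq X$ be the unique set inside $X$, and recall $A_{i+1}\cap X\neq\emptyset$. Since neither $A_{i+1}$ nor $A_{i+2}$ is contained in $X$, both $A_{i+1}\setminus X$ and $A_{i+2}\setminus X$ are nonempty, and the red subgraph lives on $(A_{i+1}\cup A_{i+2})\setminus X$. First, because $A_i\subseteq X$, Lemma~\ref{lemma:connectedA}$(ii)$ applied to $\mathcal{H}(A_{i+2},A_i)$ makes $A_{i+2}\setminus X$ a clique, so all of it lies in a single component $C$ of the red graph. It remains to attach $A_{i+1}\setminus X$ to $C$. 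For this I examine $G_{\mathcal{H}(A_{i+1},A_{i+2})}(X)$, whose hypotheses for Lemma~\ref{lemma:connectedAB} hold ($A_{i+1}$ meets $X$, and $A_{i+2}$ is not swallowed by $X$): it is connected or has exactly two components, and in the latter case each component contains a vertex of $A_{i+2}\setminus X$. Since that entire set already lies in $C$, every piece of $G_{\mathcal{H}(A_{i+1},A_{i+2})}(X)$, and with it all of $A_{i+1}\setminus X$, is absorbed into $C$. Hence $(A_{i+1}\cup A_{i+2})\setminus X\subseteq C$, and the red graph is connected.

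The load-bearing step is this last gluing: the argument succeeds precisely because the final bullet of Lemma~\ref{lemma:connectedAB} guarantees that both components of $G_{\mathcal{H}(A_{i+1},A_{i+2})}(X)$ reach into the single-vertex part $A_{i+2}$. Were one component to avoid $A_{i+2}$ entirely, collapsing $A_{i+2}\setminus X$ into one blob would not suffice to merge it. Everything else is bookkeeping modulo~$3$, matching each set to the hypergraph in which it is the single-vertex part, together with the harmless degenerate subcases (an empty vertex set in hypothesis~1, or $A_{i+2}\cap X=\emptyset$, in which case Lemma~\ref{lemma:connectedAB} already declares $G_{\mathcal{H}(A_{i+1},A_{i+2})}(X)$ connected outright).
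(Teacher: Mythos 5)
Your proof is correct and follows essentially the same route as the paper's: case 1 via Lemma~\ref{lemma:connectedA}$(ii)$, and case 2 by making $A_{i+2}\setminus X$ a single blob using $\mathcal{H}(A_{i+2},A_i)$ with $A_i\subseteq X$ and then gluing the (at most two) components of $G_{\mathcal{H}(A_{i+1},A_{i+2})}(X)$ through their guaranteed vertices in $A_{i+2}$. You merely make explicit two points the paper leaves implicit, namely the reliance on the final bullet of Lemma~\ref{lemma:connectedAB} and the harmlessness of applying the two-set lemmas with $X\not\subseteq A\cup B$.
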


\begin{proof}
    If two of $A_0,A_1,A_2$ are subsets of $X$, then the red graph only contains the vertices of the third set and is connected by part (ii) of Lemma~\ref{lemma:connectedA}.

    Now suppose that the second set of assumptions is satisfied.  If $A_{i+2} \cap X=\emptyset$, then the subgraph induced by $A_{i+1}\cup A_{i+2}$ (which is actually the whole graph) is connected by Lemma~\ref{lemma:connectedAB} (with $A=A_{i+1}$ and $B=A_{i+2}$). If $A_{i+2} \cap X \neq \emptyset$, the graph $G_{\mathcal{H}(A_{i+1},A_{i+2})}(X)$ might have two components, but the edges that are there from $G_{\mathcal{H}(A_{i+2},A_{i})}(X)$ connect all the vertices of $A_{i+2}\setminus X$, so our graph is connected.
\end{proof}

% ***** Move the box to the end of previous line
%%%%%%%%%%%%%%%%%%%%%%%%%%%%%%%%%%%%%%%%%%%%%%%%%%%%%%55
\section{For \texorpdfstring{$n \equiv 0 \; ({\mbox{mod }} 3)$, $\mathcal{E}$}{} is a blocking set}

\begin{theorem}\label{thm:main3k}
    For $n \equiv 0 \; ({\mbox{mod }} 3)$, $\mathcal{E}$ is a blocking set.
\end{theorem}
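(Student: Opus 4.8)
The plan is to apply Theorem~\ref{thm:characterization}: it suffices to prove that $G_{\mathcal{E}}(X)$ is connected for every nonempty $X\subseteq V$ (when $|V\setminus X|\le 1$ the graph is trivially connected and there is no $3$-partition to block, and parts of a $3$-partition are nonempty, so $X\neq\emptyset$). I would split the edges of $G_{\mathcal{E}}(X)$ into the \emph{red} edges, coming from $\mathcal{H}(A_0,A_1)$, $\mathcal{H}(A_1,A_2)$, $\mathcal{H}(A_2,A_0)$, and the \emph{blue} edges, coming from the triples $\{u_i,v_i,w_i\}$. The red subgraph was already analyzed in Lemmas~\ref{lemma:connected123} and~\ref{lemma:connected1}, so the strategy is to let those lemmas settle as many configurations of $X$ as possible and then use the blue edges to repair connectivity in the few remaining ones.

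To organize the case analysis I would classify each $A_i$ as \emph{full} ($A_i\subseteq X$), \emph{partial} ($A_i\cap X\neq\emptyset$ but $A_i\not\subseteq X$), or \emph{empty} ($A_i\cap X=\emptyset$). The construction is invariant under the cyclic automorphism $u_i\mapsto v_i\mapsto w_i\mapsto u_i$, which maps $\mathcal{H}(A_0,A_1)\to\mathcal{H}(A_1,A_2)\to\mathcal{H}(A_2,A_0)$ and fixes each blue triple, so I may rotate the indices at will. Lemma~\ref{lemma:connected1} makes the red graph connected whenever at least two of the $A_i$ are full, or exactly one is full (say $A_i$) and its cyclic successor $A_{i+1}$ meets $X$; Lemma~\ref{lemma:connected123} covers the case of no full $A_i$ and at least two partial ones (among three cyclic sets, any two are adjacent). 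Running through all $3^3$ status patterns, the only configurations \emph{not} covered, up to rotation, are: (a) $X$ a proper nonempty subset of a single $A_i$; (b) $X=A_i$; and (c) $A_i$ full while $A_{i+1}$ is empty and $A_{i+2}$ is partial.

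For each of these I would take $A_0$ to be the distinguished set and show the blue edges reconnect the red components. In (a), say $X\subsetneq A_0$: Lemma~\ref{lemma:connectedA}(i) applied to $\mathcal{H}(A_2,A_0)$ makes $A_2$ one red component with no red edges to $A_0$; since $X$ avoids $A_1$, the ``$B\cap X=\emptyset$'' clause of Lemma~\ref{lemma:connectedAB} makes $G_{\mathcal{H}(A_0,A_1)}(X)$ \emph{connected} on $(A_0\setminus X)\cup A_1$; as $\mathcal{H}(A_1,A_2)$ contributes no edges, a single blue edge $\{v_j,w_j\}$ with $u_j\in X$ joins the two blobs. In (b), $X=A_0$: the red graph is $A_2$ (a clique by Lemma~\ref{lemma:connectedA}(ii)) together with the isolated vertices of $A_1$, and every blue edge $\{v_j,w_j\}$ attaches $v_j$ to $A_2$. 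In (c): Lemma~\ref{lemma:connectedA}(i) makes $A_1$ connected, Lemma~\ref{lemma:connectedA}(ii) makes $A_2\setminus X$ a clique, no red edges run between them, and any index $j$ with $w_j\notin X$, which exists because $A_2\not\subseteq X$, supplies a blue edge $\{v_j,w_j\}$ joining the two components.

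The step I expect to be most delicate is case (a): one naively worries that $G_{\mathcal{H}(A_0,A_1)}(X)$ might split into the two components permitted by Lemma~\ref{lemma:connectedAB}, with only one of them containing a ``grounded'' vertex $v_j$ (where $u_j\in X$), leaving the other stranded from $A_2$. The resolution is exactly the observation that in this configuration $X$ is disjoint from $A_1$, so the special clause of Lemma~\ref{lemma:connectedAB} forces connectivity and the difficulty disappears. The remaining work is bookkeeping: checking that the enumeration of status patterns is exhaustive, confirming which lemma part applies to each red subgraph, and verifying that at least one usable blue edge is present in each bad case.
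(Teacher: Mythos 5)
Your proposal is correct and follows essentially the same route as the paper: invoke Theorem~\ref{thm:characterization}, dispose of most configurations of $X$ via the red-subgraph Lemmas~\ref{lemma:connected123} and~\ref{lemma:connected1}, and handle the leftover patterns with blue edges $\{v_j,w_j\}$ coming from triples $\{u_j,v_j,w_j\}$ with $u_j\in X$. Your cases (a), (b), (c) are exactly (up to the cyclic rotation you justify) the content of the paper's Lemma~\ref{lemma:connected2}, argued with the same applications of Lemmas~\ref{lemma:connectedA} and~\ref{lemma:connectedAB}.
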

\begin{proof}%
Let $X\subset V$. We will prove that $G_{\mathcal{E}}(X)$ is connected. If the hypotheses of Lemma~\ref{lemma:connected123} or Lemma~\ref{lemma:connected1} are satisfied, the red graph is connected, and since it is a subgraph of $G_{\mathcal{E}}(X)$ that has the same vertex set, $G_{\mathcal{E}}$ is also connected. If the hypotheses of neither of the two lemmas are satisfied, then the red graph alone is not enough. In this case, we need some blue edges, which are provided by Lemma~\ref{lemma:connected2} below.
\end{proof}

\begin{lemma}\label{lemma:connected2}
     Let $X$ be a subset of $V$ for which one of the following holds:
    \begin{enumerate}
        \item There is only one of $A_0, A_1, A_2$ that is a subset of $X$, say $A_i$, and $A_{i+1} \cap X = \emptyset$.
        \item None of the sets $A_0, A_1, A_2$ is a subset of $X$ and there is only one of $A_0$, $A_1$, $A_2$, say $A_i$, such that $A_i \cap X \neq \emptyset$.
    \end{enumerate}
Then, the graph $G_{\mathcal{E}} (X)$ is connected.
\end{lemma}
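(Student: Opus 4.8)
The plan is to exploit the cyclic symmetry of the construction: the three red hypergraphs $\mathcal{H}(A_0,A_1)$, $\mathcal{H}(A_1,A_2)$, $\mathcal{H}(A_2,A_0)$ are arranged cyclically and the blue triples $\{u_i,v_i,w_i\}$ are invariant under the rotation $A_0\to A_1\to A_2\to A_0$, so I may assume without loss of generality that the distinguished set is $A_0$. That is, in case (1) we have $A_0\subseteq X$ and $A_1\cap X=\emptyset$, and in case (2) we have $\emptyset\neq A_0\cap X\subsetneq A_0$ while $A_1\cap X=A_2\cap X=\emptyset$. In both situations $A_1\cap X=\emptyset$, and the first step is to read off the red subgraph on $V\setminus X$ from Lemmas~\ref{lemma:connectedA} and~\ref{lemma:connectedAB}, being careful that in $\mathcal{H}(A,B)$ the set $A$ is the ``doubled'' one, so red edges only ever join two $A$-vertices or an $A$-vertex to a $B$-vertex, never two $B$-vertices.

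For case (2) I would first note that $\mathcal{H}(A_1,A_2)$ contributes no edges at all (neither $A_1$ nor $A_2$ meets $X$), that $\mathcal{H}(A_0,A_1)$ yields a single connected piece on $(A_0\setminus X)\cup A_1$ by Lemma~\ref{lemma:connectedAB} (using $A_1\cap X=\emptyset$), and that $\mathcal{H}(A_2,A_0)$ makes $A_2$ connected with no edges leaving it, by Lemma~\ref{lemma:connectedA}$(i)$. Thus the red graph has exactly two components, $(A_0\setminus X)\cup A_1$ and $A_2$. The crucial observation is that the blue triple $\{u_i,v_i,w_i\}$ produces the edge $\{v_i,w_i\}$ in $G_{\mathcal{E}}(X)$ precisely when $u_i\in X$; since $A_0\cap X\neq\emptyset$, at least one such $i$ exists, and this single blue edge bridges the two red components, proving connectivity.

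For case (1), $A_0\subseteq X$ forces every $u_i\in X$, so $\mathcal{H}(A_0,A_1)$ contributes nothing while $\mathcal{H}(A_2,A_0)$ makes $A_2\setminus X$ a clique by Lemma~\ref{lemma:connectedA}$(ii)$, and the blue edge $\{v_i,w_i\}$ is present for every $i$ with $w_i\notin X$. I would split according to $A_2\cap X$: if $A_2\cap X\neq\emptyset$, then Lemma~\ref{lemma:connectedA}$(i)$ applied to $\mathcal{H}(A_1,A_2)$ makes all of $A_1$ a single red component, and one blue edge $\{v_i,w_i\}$ with $w_i\in A_2\setminus X$ (nonempty since $A_2\not\subseteq X$) attaches it to the $A_2\setminus X$ clique.

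The step I expect to be the main obstacle is the remaining sub-case of (1), where $A_2\cap X=\emptyset$: here $\mathcal{H}(A_1,A_2)$ provides no red edges, so the vertices of $A_1$ are red-isolated and the red graph is badly disconnected. The argument must then lean entirely on the blue edges, using that every $u_i\in X$ and every $w_i\notin X$ to attach each $v_i$ individually to the vertex $w_i$ of the red clique $A_2=A_2\setminus X$, which ties the whole graph together. Throughout, the care needed is in matching the orientation of each $\mathcal{H}(A_s,A_{s+1})$ to the correct hypotheses of Lemmas~\ref{lemma:connectedA} and~\ref{lemma:connectedAB}, since swapping the two roles changes which induced subgraph is guaranteed connected.
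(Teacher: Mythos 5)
Your proof is correct and takes essentially the same route as the paper's: the same case split, the same applications of Lemmas~\ref{lemma:connectedA} and~\ref{lemma:connectedAB} to the three red subgraphs, and the same use of the blue edges coming from triples $\{u_i,v_i,w_i\}$ with one vertex in $X$ to bridge the red components. The only differences are the cosmetic choice of which $A_i$ is distinguished (justified by the cyclic symmetry you note) and that in one sub-case you attach the isolated part to the red clique via a single blue edge where the paper attaches each leftover vertex individually.
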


\begin{proof}
We separate the proof of the lemma according to the numbering on its statement.
\begin{enumerate}
    \item Without loss of generality, assume that $A_1 \cap X=A_1$ and also $A_2\cap X=\emptyset$. If $A_0 \cap X = \emptyset$, then the subgraph induced by $A_0$ is connected by Lemma~\ref{lemma:connectedAB} (i) with $A=A_0$ and $B=A_1$. The vertices of $A_2$ are connected to the vertices of $A_0$ by edges $\{u_i,w_i\}$ for all $i$ (since all vertices of $A_1$ are in $X$ and we have all triples $\{u_i,v_i,w_i\}$), so the whole graph is connected. If, on the other hand, $A_0\cap X\neq\emptyset$, then the subgraph induced by $A_2$ is connected by part (ii) of Lemma~\ref{lemma:connectedAB} (with $A=A_2$ and $B=A_0$). Each $u_i\in A_0\setminus X$ is connected to this subgraph by the edge $\{u_i,w_i\}$, similarly as above.
    \item If only one of the $A_i$ has vertices in $X$, say $A_1\cap X\neq \emptyset$, then the subgraph induced by $(A_1\cup A_2)\setminus X$ is connected by Lemma~\ref{lemma:connectedAB} and so is the subgraph induced by $A_0\setminus X$. To connect these two subgraphs, find a vertex $v_i$ of $A_1\cap X$. Then $\{u_i,w_i\}$ is an edge of $G_{\mathcal{H}}(X)$ since $v_i\in X$.
\end{enumerate}
\end{proof}

%%%%%%%%%%%%%%%%%%%%%%%%%%%%%%%%%%%%%%%%%%%%%%%%%%%%%%%%%%%%%%%%%%%%%%%%%%%%
%%%%%%%%%%%%%%%%%%%%%%%%%%%%%%%%%%%%%%%%%%%%%%%%%%%%%%55

\section{For \texorpdfstring{$n \equiv 1 \; ({\mbox{mod }} 3)$, $\mathcal{E}$}{} is a blocking set}

\begin{lemma}
\label{lemma:inftycon}
    For every $X \subsetneq A_0 \cup A_1 \cup A_2$, there is a vertex $a \in (A_0 \cup A_1 \cup A_2) \setminus X$ such that $\{ \infty,  a \}$ is an edge of $G_{\mathcal{E}}(X)$.
\end{lemma}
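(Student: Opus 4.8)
The plan is to recognize the triples through $\infty$ as the edges of a single spanning cycle and then invoke connectivity. First I would record which triples of $\mathcal{E}$ contain $\infty$: in the case $n\equiv 1\pmod 3$ these are exactly the blue triples $\{\infty,u_i,v_i\}$, $\{\infty,v_i,w_i\}$, and $\{\infty,w_i,u_{i+1}\}$, because every red triple lies inside some $A_i\cup A_{i+1}$ and so avoids $\infty$. Hence $\{\infty,a\}$ is an edge of $G_{\mathcal{E}}(X)$ if and only if some blue triple $\{\infty,p,q\}$ has $p\in X$ and $q=a\notin X$; equivalently, it suffices to exhibit one \emph{blue pair} $\{p,q\}$ (the two non-$\infty$ vertices of a blue triple) having exactly one endpoint in $X$.

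Next I would consider the graph $H$ on $A_0\cup A_1\cup A_2$ whose edges are all the blue pairs $\{u_i,v_i\}$, $\{v_i,w_i\}$, $\{w_i,u_{i+1}\}$. A degree count shows every vertex has degree $2$ (for instance $u_i$ lies only in $\{u_i,v_i\}$ and $\{w_{i-1},u_i\}$), and following the edges from $u_0$ produces the single closed walk $u_0,v_0,w_0,u_1,v_1,w_1,\dots,u_{k-1},v_{k-1},w_{k-1},u_0$. Thus $H$ is one cycle of length $3k$ spanning all of $A_0\cup A_1\cup A_2$, and in particular $H$ is connected.

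Finally, since $X$ is a nonempty proper subset of $V(H)=A_0\cup A_1\cup A_2$, connectivity of $H$ forces an edge across the cut between $X$ and its complement: traversing the cycle, at some step we must pass from a vertex in $X$ to a vertex outside $X$. Taking such a crossing blue pair $\{p,q\}$ with $p\in X$ and $q\notin X$, the triple $\{\infty,p,q\}\in\mathcal{E}$ together with $p\in X$ shows that $\{\infty,q\}$ is an edge of $G_{\mathcal{E}}(X)$, so $a=q$ works. The only genuinely new observation is that the blue pairs assemble into a single spanning cycle; after that the argument is the standard fact that a connected graph meets every nontrivial cut, so I expect no real obstacle. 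The one point to watch is the degenerate case $X=\emptyset$, where $G_{\mathcal{E}}(X)$ has no edges at all and the statement fails; this must be excluded, matching the intended use of the lemma for nonempty $X$.
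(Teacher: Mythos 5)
Your proof is correct and is essentially the paper's argument: the paper considers the circular sequence $u_0,v_0,w_0,u_1,v_1,w_1,\dots$ (which is exactly your spanning cycle of blue pairs) and picks a spot where an element of $X$ is immediately followed by an element not in $X$, yielding the edge $\{\infty,y\}$. Your caveat about $X=\emptyset$ matches the paper's implicit assumption that ``some, but not all'' elements of the sequence lie in $X$, and is harmless since the lemma is only invoked for nonempty $X$.
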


\begin{proof}
Consider the circular sequence $\{ u_1, v_1, w_1, u_2, v_2, w_2, u_3, \dots, u_{k-1}, v_{k-1}, w_{k-1} \}$. Some, but not all, of its elements belong to $X$, so there is some $x\in X\cap (A_0\cup A_1 \cup A_2)$ such that the element $y$ immediately following $x$ in this sequence is not in $X$. Since the triple $\{\infty, x,y\}$ belongs to $\mathcal{E}$, the pair $\{\infty,y\}$ is an edge of $G_{\mathcal{E}}(X)$.
\end{proof}

We now prove an analogue of Lemma~\ref{lemma:connected2}.

\begin{lemma}
\label{lemma:2,3k+1}
    Let $X$ be a subset of $V$ such that one of the following holds:
        \begin{enumerate}
            \item There is only one of $A_0, A_1, A_2$ that is a subset of $X$, say $A_i$, and $A_{i+1} \cap X = \emptyset$.
            \item None of the sets $A_0, A_1, A_2$ is a subset of $X$ and there is only one of $A_0,$ $A_1,$ $A_2$, say $A_i$, such that $A_i \cap X \neq \emptyset$.
        \end{enumerate}
    Then, the graph $G_{\mathcal{E}} (X)$ is connected.
\end{lemma}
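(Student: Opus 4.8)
The plan is to exploit the special shape of the blue triples for $n\equiv 1$: every blue triple has the form $\{\infty,x,y\}$, where $\{x,y\}$ ranges over the edges of the single cycle
\[
 C = u_0\,v_0\,w_0\,u_1\,v_1\,w_1\,\cdots\,u_{k-1}\,v_{k-1}\,w_{k-1}\,u_0
\]
on $A_0\cup A_1\cup A_2$. In other words, the blue triples form a cone with apex $\infty$ over $C$, so their contribution to $G_{\mathcal{E}}(X)$ depends entirely on whether $\infty\in X$, and I would split the proof along this line. In both cases the red subgraph is controlled exactly as in the $n\equiv 0$ proof, via Lemmas~\ref{lemma:connectedA} and~\ref{lemma:connectedAB}. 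A useful fact about $C$, which I will use repeatedly, is that each vertex of a fixed class $A_i$ has its two $C$-neighbours in the two other classes, one in each, and that the vertices of $A_i$ occur at every third position, so every vertex outside $A_i$ is a $C$-neighbour of some vertex of $A_i$.

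First suppose $\infty\notin X$. Then a blue triple $\{\infty,x,y\}$ contributes the spoke $\{\infty,y\}$ exactly when $x\in X$ and $y\notin X$; that is, $\infty$ is joined to every present vertex that is a $C$-neighbour of a vertex of $X$ (this refines Lemma~\ref{lemma:inftycon}, which only produces one such spoke). In the first case of the hypothesis we have $A_i\subseteq X$, and by the spacing remark every present vertex of $A_{i+1}\cup A_{i+2}$ is a $C$-neighbour of some vertex of $A_i\subseteq X$; hence $\infty$ is adjacent to all present vertices and $G_{\mathcal{E}}(X)$ is connected outright. In the second case only $A_i$ meets $X$, while $A_{i+1}$ and $A_{i+2}$ are disjoint from $X$; Lemmas~\ref{lemma:connectedA} and~\ref{lemma:connectedAB} then give the red subgraph exactly the two components $(A_i\cup A_{i+1})\setminus X$ and $A_{i+2}$. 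Choosing any $a\in A_i\cap X$, its two $C$-neighbours lie in the fully present classes $A_{i+1}$ and $A_{i+2}$, so they yield spokes from $\infty$ into each of the two components, and $\infty$ glues them together.

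Now suppose $\infty\in X$. Then the blue triples contribute precisely the edges of $C$ both of whose endpoints avoid $X$, i.e.\ the cycle $C$ with the vertices of $X$ deleted. In the first case ($A_i\subseteq X$, $A_{i+1}\cap X=\emptyset$), deleting $A_i$ from $C$ leaves a perfect matching between $A_{i+1}$ and $A_{i+2}$; the surviving blue edges are those matching pairs with both endpoints present. Since $A_{i+1}$ is entirely present while $A_{i+2}\not\subseteq X$, at least one such edge exists and bridges the red component carried by $A_{i+1}$ to the one carried by $A_{i+2}\setminus X$ (the degenerate sub-case $A_{i+2}\cap X=\emptyset$, in which $A_{i+1}$ has no red edges, is handled by the same matching attaching each vertex of $A_{i+1}$ to the connected red graph on $A_{i+2}$). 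In the second case the two red components are $(A_i\cup A_{i+1})\setminus X$ and $A_{i+2}$; because $A_{i+1}$ and $A_{i+2}$ are both disjoint from $X$, all of the $C$-edges joining $A_{i+1}$ to $A_{i+2}$ survive and bridge the two components.

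The main obstacle, and the reason for the bookkeeping above, is that the blue cycle $C$ is not aligned with the red structure: the triples $\{\infty,w_i,u_{i+1}\}$ introduce a ``$+1$'' shift, so no single rotation of $A_0,A_1,A_2$ is simultaneously an automorphism of the red and of the blue triples. Consequently the three choices of the distinguished class $A_i$ cannot be reduced to one by symmetry and must each be checked; in each one has to confirm that the surviving blue edges---a specific one of the three types $\{u_m,v_m\}$, $\{v_m,w_m\}$, $\{w_m,u_{m+1}\}$---actually connect the red components produced by Lemmas~\ref{lemma:connectedA} and~\ref{lemma:connectedAB}. The cone-over-$C$ viewpoint is what keeps this case analysis uniform and short.
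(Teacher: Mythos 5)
Your proof is correct and takes essentially the same route as the paper: in both, the red structure is pinned down by Lemmas~\ref{lemma:connectedA} and~\ref{lemma:connectedAB}, and the argument splits on whether $\infty\in X$, using a spanning blue star (or two spokes at the $C$-neighbours of a deleted vertex) when $\infty\notin X$, and the surviving matching/cycle edges of $C$ as bridges when $\infty\in X$. The ``cone over the cycle $C$'' viewpoint is a nice unifying repackaging of the paper's case analysis, but the substance is identical.
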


\begin{proof}
We separate the proof according to the numbering on the statement.
\begin{enumerate}
    \item Assume that $A_i \cap X=A_i$ and also $A_{i+1}\cap X=\emptyset$. First, consider the case where $\infty \in X$. If $A_{i+2} \cap X = \emptyset$, then the subgraph induced by $A_{i-1}$ is connected by Lemma~\ref{lemma:connectedAB} (i) with $A=A_{i-1}$ and $B=A_i$. Since $\infty \in X$ and none of the vertices of $A_{i+1}$ and $A_{i+2}$ are in $X$, there is a blue matching between the vertices of $A_{i+1}$ and $A_{i-1}$. So the whole graph is connected. If, on the other hand, $A_{i-1} \cap X \neq \emptyset$, then the subgraph induced by $A_{i+1}$ is connected by part (ii) of Lemma~\ref{lemma:connectedAB} (with $A=A_{i+1}$ and $B=A_{i-1}$). There is a blue matching between the vertices of $A_{i-1}\setminus X$ and the ``corresponding" vertices of $A_{i+1}$ (all of those are available, since $A_{i+1}$ has no vertices in $X$).

    Now, consider the case where $\infty \notin X$. Since $A_{i+1} \cap X = \emptyset$ and $A_i\cap X=A_i$, every vertex of $A_{i+1}$ is connected by a blue edge to $\infty$. For the analogous reasons, every vertex in $A_{i-1} \setminus X$ is also connected to $\infty$. The graph $G_{\mathcal{H}} (X)$ contains a spanning blue star, so it is connected.

    \item Assume that only $A_i$ satisfies $A_i \cap X \neq \emptyset$. Then, the subgraph induced by $(A_i\cup A_{i+1})\setminus X$ is connected and so is the subgraph induced by $A_{i-1}$. If $\infty \notin X$, then find a vertex $v_i$ of $A_i \cap X$. Then $\{u_i, \infty\}$ and $\{ \infty, w_i \}$ are edges of $G_{\mathcal{H}}(X)$. If $\infty \in X$, then we connect these subgraphs with using the edges $\{ u_{i+1}, w_i \}$.
\end{enumerate}
\end{proof}

\begin{theorem}\label{theorem:main3k+1}
    For $n \equiv 1 \; ({\mbox{mod }} 3)$, $\mathcal{E}$ is a blocking set.
\end{theorem}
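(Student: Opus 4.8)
The plan is to mirror exactly the structure used for the divisible-by-3 case in Theorem \ref{thm:main3k}. We must show that for every $X \subset V$, the graph $G_{\mathcal{E}}(X)$ is connected, where now $V = A_0 \cup A_1 \cup A_2 \cup \{\infty\}$. The key observation is that the red subgraph analysis (Lemmas \ref{lemma:connected123} and \ref{lemma:connected1}) depends only on the sets $A_0, A_1, A_2$ and the red triples from the hypergraphs $\mathcal{H}(A_i, A_{i+1})$; these are unchanged in the $n \equiv 1$ case. So those two lemmas still certify connectivity of the red subgraph whenever their hypotheses on $A_0, A_1, A_2$ are met, and the red subgraph is a subgraph of $G_{\mathcal{E}}(X)$ with the same vertex set (apart from $\infty$).

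First I would dispose of the case $\infty \in X$. Here the new blue triples contribute extra edges among $A_0 \cup A_1 \cup A_2$, so $G_{\mathcal{E}}(X)$ restricted to $(A_0 \cup A_1 \cup A_2)\setminus X$ contains the red subgraph as a spanning subgraph; any $X$ satisfying the hypotheses of Lemma \ref{lemma:connected123} or \ref{lemma:connected1} is therefore already handled, and the remaining configurations are exactly those covered by Lemma \ref{lemma:2,3k+1}. When $\infty \notin X$, the vertex $\infty$ must also be connected into the rest of the graph; here Lemma \ref{lemma:inftycon} is the crucial tool, since it guarantees at least one edge $\{\infty, a\}$ with $a \in (A_0 \cup A_1 \cup A_2)\setminus X$, so once the rest of $G_{\mathcal{E}}(X)$ is connected, appending $\infty$ through this single edge keeps it connected.

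With these two reductions in place, the formal argument is a short case split. If $\infty \notin X$: apply Lemma \ref{lemma:connected123}, Lemma \ref{lemma:connected1}, or Lemma \ref{lemma:2,3k+1} (according to which hypothesis the set $X \cap (A_0 \cup A_1 \cup A_2)$ satisfies) to conclude that the induced subgraph on $(A_0 \cup A_1 \cup A_2)\setminus X$ is connected, then use Lemma \ref{lemma:inftycon} to attach $\infty$. One must also treat the degenerate boundary cases where $X \supseteq A_0 \cup A_1 \cup A_2$, leaving only $\infty$ (or nothing) outside $X$, which are trivial. If $\infty \in X$: the same three-way lemma split applies directly to the full graph $G_{\mathcal{E}}(X)$, and connectivity follows.

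The main obstacle I anticipate is not any single lemma but the bookkeeping of verifying that the case hypotheses of Lemmas \ref{lemma:connected123}, \ref{lemma:connected1}, and \ref{lemma:2,3k+1} collectively exhaust all possible intersection patterns of $X$ with $A_0, A_1, A_2$ — in particular checking that the ``none of the lemmas apply'' residue is genuinely empty, just as in the proof of Theorem \ref{thm:main3k}. The other subtlety worth double-checking is that Lemma \ref{lemma:2,3k+1} already folds the role of $\infty$ into its statement (it treats both $\infty \in X$ and $\infty \notin X$ internally), so for the configurations it governs one should invoke it directly rather than separately re-deriving the $\infty$ attachment via Lemma \ref{lemma:inftycon}.
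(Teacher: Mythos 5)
Your proposal follows essentially the same route as the paper's proof: dispose of the trivial case where only $\infty$ lies outside $X$, use Lemmas \ref{lemma:connected123} and \ref{lemma:connected1} to get a connected red subgraph and Lemma \ref{lemma:inftycon} to attach $\infty$ when $\infty\notin X$, and invoke Lemma \ref{lemma:2,3k+1} (which already handles $\infty$ internally) for the remaining intersection patterns. The exhaustiveness concern you raise is handled in the paper exactly as you anticipate, by observing that the hypotheses of Lemmas \ref{lemma:connected123}, \ref{lemma:connected1}, and \ref{lemma:2,3k+1} together cover all cases where $X$ meets $A_0\cup A_1\cup A_2$.
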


\begin{proof}
Again, we prove that $G_{\mathcal{E}}(X)$ is connected, for each $X\subset V$. If $\infty$ is the only vertex not in $X$, then $G_{\mathcal{E}}(X)$ is trivially connected. Now suppose that this is not the case.

If the hypotheses of Lemma~\ref{lemma:connected123} or Lemma~\ref{lemma:connected1} are satisfied, the red subgraph is connected. If $\infty \in X$, the graph $G_{\mathcal{E}}(X)$ has the same vertices as the red subgraph, but possibly some more (blue) edges. It is therefore connected. If $\infty\not\in X$, then we have the (connected) red subgraph and an additional vertex $\infty$ and some blue edges going from the red subgraph to $\infty$. The existence of at least one such edge is guaranteed by  Lemma~\ref{lemma:inftycon}.

If the hypotheses of neither Lemma~\ref{lemma:connected123} nor Lemma~\ref{lemma:connected1} are satisfied, then $G_{\mathcal{E}}(X)$ is connected using Lemma~\ref{lemma:2,3k+1}.
\end{proof}

%%%%%%%%%%%%%%%%%%%%%%%%%%%%%%%%%%%%%%%%%%%%%%%%%%%%%%55
\section{For \texorpdfstring{$n \equiv 2 \; ({\mbox{mod }} 3)$, $\mathcal{E}$}{} is a blocking set}\label{sec:2mod3}

\begin{lemma}
\label{lemma:inftyonetwocon}
    Let $X \subsetneq A_0 \cup A_1 \cup A_2$. If $\infty_1$ (resp. $\infty_2$) is not adjacent to any point of $(A_0 \cup A_1 \cup A_2) \setminus X$ in $G_{\mathcal{H}}(X)$, then $\infty_2$ (resp. $\infty_1$) is adjacent to some point $a \in (A_0 \cup A_1 \cup A_2) \setminus X$ and $\{ \infty_1, \infty_2 \}$ is an edge of $G_{\mathcal{H}}(X)$.
    %\begin{enumerate}
    %    \item There are points $a_1, a_2 \in (A_0 \cup A_1 \cup A_2) \setminus X$ such that $\{ \infty_1, a_1 \}, \{a_2, \infty_2 \}$ are edges of $G_{\mathcal{H}}(X)$.
    %\end{enumerate}
\end{lemma}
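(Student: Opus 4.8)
The plan is to read off, from the hypothesis, exactly which vertices of $A_0\cup A_1\cup A_2$ lie in $X$, and then to exhibit the two promised edges directly. Throughout write $A=A_0\cup A_1\cup A_2$. Since $X\subseteq A$ we have $\infty_1,\infty_2\notin X$, so both are genuine vertices of $G_{\mathcal{E}}(X)$; I would also note that we may assume $X\neq\emptyset$ (if $X=\emptyset$ the graph has no edges and this lemma is not invoked in the surrounding connectivity argument). The two ``resp.'' cases are treated separately because the triples through $\infty_1$ pair vertices of equal index while those through $\infty_2$ pair shifted indices.

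First I would handle the case that $\infty_1$ has no neighbour in $A\setminus X$. The only triples containing $\infty_1$ and two vertices of $A$ are $\{\infty_1,u_i,v_i\}$ and $\{\infty_1,v_i,w_i\}$; the triple $\{\infty_1,\infty_2,w_i\}$ cannot produce an $\infty_1$-to-$A$ edge since $\infty_2\notin X$. An edge from $\infty_1$ into $A\setminus X$ appears exactly when one of the pairs $\{u_i,v_i\}$ or $\{v_i,w_i\}$ is \emph{mixed} (one endpoint in $X$, the other not), so the hypothesis forces, for every $i$, that $u_i,v_i,w_i$ are all in $X$ or all outside $X$; each index-column is monochromatic. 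Because $\emptyset\neq X\subsetneq A$, the set $S=\{i:u_i,v_i,w_i\in X\}$ is a nonempty proper subset of the cyclic index set, hence there is an index $i$ with $i\in S$ and $i+1\notin S$. For this $i$ we have $w_i\in X$ and $u_{i+1}\notin X$: the triple $\{\infty_1,\infty_2,w_i\}$ yields the edge $\{\infty_1,\infty_2\}$, and the triple $\{\infty_2,w_i,u_{i+1}\}$ yields the edge $\{\infty_2,u_{i+1}\}$ with $u_{i+1}\in A\setminus X$, exactly as required.

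For the symmetric case, where $\infty_2$ has no neighbour in $A\setminus X$, the same strategy works but the bookkeeping is heavier because the $\infty_2$-triples $\{\infty_2,w_i,u_{i+1}\}$ and $\{\infty_2,u_i,v_{i+1}\}$ are shifted. Reading off the hypothesis now gives, for all $i$, that $w_i\in X$ if and only if $u_{i+1}\in X$, and $u_i\in X$ if and only if $v_{i+1}\in X$. Unwinding these shows that $u_i\in X$ if and only if $w_{i-1}\in X$, and $v_i\in X$ if and only if $w_{i-2}\in X$, so membership in $X$ of every vertex of $A$ is governed by the single cyclic $0/1$ pattern recording which $w_i$ lie in $X$. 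Since $X\neq\emptyset$ some vertex is in $X$, and the equivalences propagate this to some $w_i\in X$, which supplies the edge $\{\infty_1,\infty_2\}$ via $\{\infty_1,\infty_2,w_i\}$. Since in addition $X\subsetneq A$, this $w$-pattern is non-constant, so for some index $u_i$ and $v_i$ have opposite membership status; the mixed pair $\{u_i,v_i\}$ inside $\{\infty_1,u_i,v_i\}$ then produces an edge from $\infty_1$ to a point of $A\setminus X$.

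I expect the main obstacle to be exactly this asymmetry: in the $\infty_1$ case the structural consequence (monochromatic columns) is immediate and the boundary-index argument is clean, whereas in the $\infty_2$ case I must first unwind the shifted equivalences to express the membership of all of $u_i,v_i,w_i$ in terms of the one $w$-pattern before the ``non-constant cyclic pattern forces a transition'' argument can be reused. The remaining care is routine: every edge I invoke must be checked to land on a vertex genuinely outside $X$, and the degenerate case $X=\emptyset$ (where the claimed conclusion would fail) must be excluded.
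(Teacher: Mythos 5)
Your proof is correct and takes essentially the same route as the paper's: in each case the hypothesis forces the triples $\{u_i,v_i,w_i\}$ (resp.\ the shifted triples $\{w_i,u_{i+1},v_{i+2}\}$) to be monochromatic with respect to $X$, and a transition in the resulting cyclic pattern produces both the edge $\{\infty_1,\infty_2\}$ (via $\{\infty_1,\infty_2,w_i\}$ with $w_i\in X$) and the required edge from the other $\infty$-vertex into $(A_0\cup A_1\cup A_2)\setminus X$. Your explicit exclusion of $X=\emptyset$ is a worthwhile addition, since the statement genuinely fails there and the paper's proof silently assumes $X\cap(A_0\cup A_1\cup A_2)\neq\emptyset$.
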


\begin{proof}
    Let $X \subsetneq A_0 \cup A_1 \cup A_2$. If $\infty_1$ is not adjacent to any point of $(A_0 \cup A_1 \cup A_2) \setminus X$ in $G_{\mathcal{H}}(X)$ then for all $i \in \{ 0, 1, \ldots, k-1 \}$ it must hold that $\{ u_i, v_i, w_i \} \subset X$ or $\{ u_i, v_i, w_i \} \cap X = \emptyset$. In this case, since $X \neq A_0 \cup A_1 \cup A_2$, there is $i_0$ such that $\{ u_{i_0}, v_{i_0}, w_{i_0} \} \subset X$ and $\{ u_{i_0+1}, v_{i_0+1}, w_{i_0+1} \} \cap X = \emptyset$. Thus, we have that $\{ \infty_1, \infty_2 \}$ and $\{ \infty_2, u_{i_0+1} \} $ are edges of $G_{\mathcal{H}}(X)$ because of the triple $\{ \infty_1, \infty_2, w_{i_0} \}$ and $\{ \infty_2, w_{i_0} , u_{i_0+1} \}$, respectively. Now, if $\infty_2$ is not adjacent to any point of $(A_0 \cup A_1 \cup A_2) \setminus X$ in $G_{\mathcal{H}}(X)$ then for all $i \in \{ 0, 1, \ldots, k-1 \}$ it must hold that $\{ u_{i+1}, v_{i+2}, w_i \}$ $\subset X$ or $\{ u_{i+1}, v_{i+2}, w_i \} \cap X = \emptyset$. In this case, since $X \neq A_0 \cup A_1 \cup A_2$, there is $i_0$ such that $\{ u_{i_0 + 1}, v_{i_0 + 2}, w_{i_0} \} \subset X$ and $\{ u_{i_0+2}, v_{i_0+3}, w_{i_0+1} \} \cap X = \emptyset$. Thus, we have that $\{ \infty_1, \infty_2 \}$ and $\{ \infty_1, u_{i_0+2} \} $ is an edge of $G_{\mathcal{H}}(X)$ because of the triple $\{ \infty_1, \infty_2, w_{i_0} \}$ and $\{ \infty_1, v_{i_0+2} , u_{i_0+2} \}$, respectively. Note that at most one of $\infty_1, \infty_2$ is not adjacent to any vertex of $(A_0 \cup A_1 \cup A_2) \setminus X$ in $G_\mathcal{H}(X)$.
\end{proof}

%\begin{lemma}
%\label{lemma:123,3k+2}
%    Suppose that none of $A_0, A_1, A_2$ are subsets of $X$. If $A_i \cap X \neq \emptyset$ and $A_{i+1} \cap X \neq \emptyset$, then $G_{\mathcal{H}}(X)$ is connected.
%\end{lemma}

%\begin{proof}
%    If $\infty_1, \infty_2 \in X$, then it follows from Lemma \ref{lemma:connected123}. If $\infty_1 \notin X$ and $\infty_2 \notin X$, then by Lemma \ref{lemma:connected123} and Lemma~\ref{lemma:inftyonetwocon} we know that $G_{\mathcal{H}} (X)$ is connected. Finally, if $\infty_1 \in X$ and $\infty_2 \notin X$ (resp. $\infty_1 \notin X$ and $\infty_2 \in X$), then there is $w \in A_2 \setminus X$ such that $\{ \infty_2, w\}$ (resp $\{ \infty_1, w\}$) is an edge of $G_{\mathcal{H}}(X)$ because of the triple $\{ \infty_1, \infty_2, w \}$.
%\end{proof}

%We can omit the previous lemma because it follows from lemma 4 and 9. Also, we don't use it anywhere.

\begin{lemma}
\label{lemma:2,3k+2}
    Let $X \subset A_0 \cup A_1 \cup A_2 \cup \{ \infty_1, \infty_2 \}$ such that one of the following holds:
        \begin{enumerate}
            \item There is only one of $A_0, A_1, A_2$ that is a subset of $X$, say $A_i$, and $A_{i+1} \cap X = \emptyset$.
            \item None of the sets $A_0, A_1, A_2$ is a subset of $X$ and there is only one of $A_0,$ $A_1,$ $ A_2$, say $A_i$, such that $A_i \cap X \neq \emptyset$.
        \end{enumerate}
    Then, the graph $G_{\mathcal{H}} (X)$ is connected.
\end{lemma}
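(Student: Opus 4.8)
The plan is to mirror the structure of the analogous Lemma~\ref{lemma:2,3k+1} proved for the case $n\equiv 1\pmod 3$, but now carefully accounting for the two special vertices $\infty_1,\infty_2$ and the richer set of blue triples available in this congruence class. As in the earlier proofs, I would split the argument according to the two cases in the statement, and within each case further distinguish according to which of $\infty_1,\infty_2$ lie in $X$. Throughout I rely on Lemma~\ref{lemma:connectedA} and Lemma~\ref{lemma:connectedAB} to control the red subgraph on $A\setminus X$ for the relevant pairs of the $A_j$'s, and on the list of blue triples (namely $\{\infty_1,u_i,v_i\}$, $\{\infty_1,v_i,w_i\}$, $\{\infty_2,w_i,u_{i+1}\}$, $\{\infty_2,u_i,v_{i+1}\}$, and $\{\infty_1,\infty_2,w_i\}$) to provide the connecting edges.

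First I would treat case (1), where some $A_i\subseteq X$ and $A_{i+1}\cap X=\emptyset$. The red subgraph restricted to the appropriate adjacent pair (either $(A_{i-1}\cup A_i)$ or $(A_{i+1}\cup A_{i-1})$, depending on whether $A_{i-1}\cap X$ is empty) is connected by Lemma~\ref{lemma:connectedAB}, exactly as in Lemma~\ref{lemma:2,3k+1}. The remaining work is to connect the free vertices of $A_{i+1}$ (and, when relevant, of $A_{i-1}\setminus X$) to this component. When both $\infty_1,\infty_2\notin X$, the blue triples give, for each $i$, edges from the vertices of the surviving $A_j$'s to $\infty_1$ or $\infty_2$, and the triple $\{\infty_1,\infty_2,w_i\}$ (with $w_i\in X$, which happens whenever $A_2\subseteq X$) can be used to bridge $\infty_1$ and $\infty_2$; one obtains a spanning structure through these two hub vertices. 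When one or both of $\infty_1,\infty_2$ lie in $X$, the corresponding blue triples degenerate into direct matching edges between two of the $A_j$'s (for example, $w_i\in X$ turns $\{\infty_1,\infty_2,w_i\}$ into the edge $\{\infty_1,\infty_2\}$, and $v_i\in X$ turns $\{\infty_1,u_i,v_i\}$ into $\{\infty_1,u_i\}$), and I would use these to splice the free $A_{i+1}$-vertices onto the connected red component.

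Next I would handle case (2), where exactly one $A_i$ meets $X$ and none is contained in it. Here Lemma~\ref{lemma:connectedAB} makes both the subgraph induced by $(A_i\cup A_{i+1})\setminus X$ and the subgraph induced by $A_{i-1}$ connected, and the task is purely to join these two pieces. The natural bridge comes from a vertex of $A_i\cap X$: if, say, $v_i\in X$, then the triples $\{\infty_1,u_i,v_i\}$ and $\{\infty_1,v_i,w_i\}$ (or the red triple $\{u_i,w_i,\dots\}$ when the relevant third coordinate lies in $X$) yield edges tying $u_i$ and $w_i$ across the two components, possibly routed through $\infty_1$ when $\infty_1\notin X$. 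The case analysis on membership of $\infty_1,\infty_2$ in $X$ again dictates whether the connection is a direct $A_j$-to-$A_{j'}$ edge or passes through a hub; Lemma~\ref{lemma:inftyonetwocon} guarantees that at least one of $\infty_1,\infty_2$ is adjacent to a surviving vertex whenever $X$ omits some full column, which prevents a hub vertex from becoming isolated.

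I expect the main obstacle to be the bookkeeping created by the asymmetry of the blue triples: unlike the single hub $\infty$ of the $n\equiv1$ case, here $\infty_1$ attaches to the pairs $(u_i,v_i)$ and $(v_i,w_i)$ while $\infty_2$ attaches to $(w_i,u_{i+1})$ and $(u_i,v_{i+1})$, so the two hubs connect to \emph{different} columns and the roles of $A_0,A_1,A_2$ are no longer interchangeable under cyclic rotation. Consequently the "without loss of generality" reductions that were clean in Lemma~\ref{lemma:2,3k+1} must be checked separately for each residue of $i$, and the subcase where exactly one of $\infty_1,\infty_2$ lies in $X$ requires verifying that the surviving hub still reaches every free column—precisely the content secured by Lemma~\ref{lemma:inftyonetwocon}. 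Managing these subcases uniformly, rather than enumerating all of them, will be the delicate part; the underlying connectivity facts are all already supplied by the red-subgraph lemmas.
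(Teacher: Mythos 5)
Your plan follows the paper's proof essentially step for step: the same two-case split, the same sub-split on which of $\infty_1,\infty_2$ lie in $X$, connectivity of the surviving $A_j$'s drawn from Lemmas~\ref{lemma:connectedA} and~\ref{lemma:connectedAB}, and Lemma~\ref{lemma:inftyonetwocon} to attach the hub vertices. The only slip is the parenthetical tying the edge $\{\infty_1,\infty_2\}$ to $A_2\subseteq X$ --- that edge exists precisely when some $w_i\in X$, and in the subcases where no $w_i\in X$ one must instead check that both hubs attach directly to the red component, which your appeal to Lemma~\ref{lemma:inftyonetwocon} and the blue triples already covers.
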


\begin{proof}
We separate the proof according to the numbering on the statement.
\begin{enumerate}
\item If $\{\infty_1,\infty_2\}\subseteq X$, then $G_{\mathcal{H}}(X)$ has vertex set $(A_{i+1} \cup A_{i-1}) \setminus X$. By arguments similar to the proof of the first part of Lemma~\ref{lemma:connected2}, it is connected.

If $\infty_k\in X$ and $\infty_j\not\in X$ for $\{ j,k \} =  \{1,2\}$, then either all vertices of $(A_{i+1} \cup A_{i-1}) \setminus X$ are adjacent to $\infty_j$, or the subgraph of $G_{\mathcal{H}}(X)$ induced by $(A_{i+1} \cup A_{i-1}) \setminus X$ is connected by arguments similar to the proof of the first part of Lemma~\ref{lemma:connected2}, and there are some edges from $\infty_j$ to this subgraph.

If $\{\infty_1,\infty_2\}=\emptyset$, then there are $j,k \in  \{ 1,2 \}$ such that either $\infty_j$ is adjacent to all vertices in $(A_{i+1} \cup A_{i-1}) \setminus X$ and $\infty_k$ is adjacent to some other vertex by Lemma~\ref{lemma:inftyonetwocon}, or all points of $A_{i-1}\setminus X$ is adjacent to $\infty_j$ and all points of $A_{i+1}\setminus X$ are adjacent to $\infty_k$, and $\{\infty_1,\infty_2\}$ is an edge.

   % \item By Lemma \ref{lemma:connectedA}, $A_{i-1} \setminus X$ induces a connected subgraph of $G_{\mathcal{H}}(X)$. If $\infty_k \in X$ and $\infty_j \notin X$ then either every point in $A_{i+1} \cup A_{i-1} \setminus X$ is adjacent to $\infty_j$ or $A_{i+1} \cup A_{i-1} \setminus X$ induces a connected subgraph in $G_{\mathcal{H}}(X)$ and there is a point in $A_{i+1} \cup A_{i-1} \setminus X$ adjacent to $\infty_j$ by Lemma~\ref{lemma:inftyonetwocon}. If $\{ \infty_1, \infty_2 \} \subset X$ then $A_{i+1} \cup A_{i-1} \setminus X$ is the vertex set of $G_{\mathcal{H}}(X)$ and it is connected. Finally, if $\{ \infty_1, \infty_2 \} \cap X = \emptyset$ then either there is $j \in \{ 1,2 \}$ such that $A_{i+1} \cup A_{i-1} \setminus X \cup \{ \infty_j \}$ induces a connected subgraph of $G_{\mathcal{H}}(X)$ and by Lemma \ref{lemma:inftyonetwocon} the whole graph is connected or $\infty_1$ and $\infty_2$ are adjacent and every vertex in $A_{i+1} \cup A_{i-1} \setminus X$ is adjacent to exactly one point in $\{ \infty_1, \infty_2 \}$.

    \item By Lemma \ref{lemma:connectedA}, $A_{i-1}$ induces a connected subgraph and by Lemma \ref{lemma:connectedAB} so does $A_{i} \setminus X \cup A_{i+1}$. If $\{ \infty_1, \infty_2 \} \cap X \neq \emptyset$, there are blue edges connecting these two sets, due to $\infty_j \in X$ for some $j \in \{ 1,2 \}$. If $\infty_k \not \in X$ for $k=3-j$, then this vertex is adjacent to some other vertex. If none of $\infty_1,\infty_2$ is in $X$, then either one of them has edges to both sets ($A_{i-1}$ and $A_{i} \setminus X \cup A_{i+1}$) and the other is connected to the rest by Lemma~\ref{lemma:inftyonetwocon}, or one of $\infty_1, \infty_2$ is adjacent to vertices in $A_{i-1}$ and the other to vertices in $A_{i} \setminus X \cup A_{i+1}$ and $\{ \infty_1, \infty_2 \}$ is an edge.

    %then $A_{i-1} \cup A_{i} \setminus X \cup A_{i+1}$ induces a connected subgraph and by Lemma \ref{lemma:inftyonetwocon} $G_{\mathcal{H}}(X)$ is connected. On the other hand, if $\{ \infty_1, \infty_2 \} \cap X = \emptyset$ then there is $j \in \{ 1,2 \}$ such that $A_{i-1} \cup A_{i} \setminus X \cup A_{i+1} \cup \{ \infty_j \}$ induces a connected subgraph and by Lemma \ref{lemma:inftyonetwocon} the graph $G_{\mathcal{H}} (X)$ is connected done or $\infty_1$ is adjacent to $\infty_2$ and every point in $A_{i-1} \cup A_{i+1}$ is adjacent to exactly one of point in $\{ \infty_1, \infty_2 \}$.

    %\item Assume that $A_2$ is the only set such that $A_2 \cap X \neq \emptyset $. The subgraph induced by $(A_2 \cup A_0) \cup X$ is connected by Lemma~\ref{lemma:connectedAB}. Moreover, $A_1$ is also a connected component. If $\infty_1 \in X$ or $\infty_2 \in X$, then we have that these two components are connected because of the triples $\{ \infty_1, u_i, v_i \}$ or $\{ \infty_2, u_i, v_{i+1} \}$. On the other hand, if $\infty_1, \infty_2 \notin X$ then note that $\{ \infty_1, \infty_2 \}$ is an edge of $G_{\mathcal{H}} (X)$ because of the triples $\{ \infty_1, \infty_2, w_i \}$. Moreover, note that there are vertices $u \in A_0$ and $v \in A_1$ such that $\{ \infty_2, u\}$ and $\{ \infty_1, v \}$ are edges of $G_{\mathcal{H}} (X)$ because of the triples $\{ \infty_2, w_i, u_{i+1} \}$ and $\{ \infty_1, w_i, v_i \}$, respectively.
\end{enumerate}

%If we replace the initial assumptions (say if $A_0 \subset X$ and $A_1 \cap X = \emptyset$ for \textit{1.} or $A_2$ the only set satisfying that  $A_2 \cap X \neq \emptyset$ for \textit{2.}), the proof changes only slightly and the arguments are very similar. We can discuss this more in detail if needed.
\end{proof}

\begin{theorem}\label{theorem:main3k+2}
    For $n \equiv 2 \; ({\mbox{mod }} 3)$, $\mathcal{E}$ is a blocking set.
\end{theorem}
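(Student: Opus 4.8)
The plan is to mirror the structure of Theorems~\ref{thm:main3k} and~\ref{theorem:main3k+1}: given an arbitrary $X\subsetneq V$, I must show that $G_{\mathcal{E}}(X)$ is connected, splitting into cases according to how $X$ interacts with $A_0,A_1,A_2$ and with the two special vertices $\infty_1,\infty_2$. First I would dispose of the trivial boundary cases, where $V\setminus X$ consists only of special vertices: if $X\supseteq A_0\cup A_1\cup A_2$ but $X\neq V$, then the only vertices outside $X$ are among $\{\infty_1,\infty_2\}$, and the edge $\{\infty_1,\infty_2\}$ (coming from any triple $\{\infty_1,\infty_2,w_i\}$ with $w_i\in X$) handles this. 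Having dealt with that, I assume $X\cap(A_0\cup A_1\cup A_2)\subsetneq A_0\cup A_1\cup A_2$, so Lemma~\ref{lemma:inftyonetwocon} applies.

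The heart of the argument follows the same two-regime split as before. If the hypotheses of Lemma~\ref{lemma:connected123} or Lemma~\ref{lemma:connected1} hold, then the red subgraph on the vertex set $(A_0\cup A_1\cup A_2)\setminus X$ is already connected. It then remains only to attach whichever of $\infty_1,\infty_2$ lie outside $X$. For each $\infty_j\notin X$, I would invoke Lemma~\ref{lemma:inftyonetwocon}: it guarantees that at most one of $\infty_1,\infty_2$ fails to be adjacent to $(A_0\cup A_1\cup A_2)\setminus X$, and moreover that if one of them is isolated from the red part, then $\{\infty_1,\infty_2\}$ is nonetheless an edge and the other special vertex is attached. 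So whichever special vertices are present outside $X$ get hooked onto the connected red subgraph, either directly or through the edge $\{\infty_1,\infty_2\}$, and $G_{\mathcal{E}}(X)$ is connected. If instead the hypotheses of neither Lemma~\ref{lemma:connected123} nor Lemma~\ref{lemma:connected1} are satisfied, then $X$ must fall under one of the two cases of Lemma~\ref{lemma:2,3k+2}, which directly yields that $G_{\mathcal{H}}(X)$—and hence $G_{\mathcal{E}}(X)$, which has the same vertices and at least as many edges—is connected.

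The one point requiring a little care, and the likely main obstacle, is the bookkeeping of exactly when a special vertex can become isolated from the red subgraph, and confirming that Lemma~\ref{lemma:inftyonetwocon} as stated covers every such configuration. Because $\infty_1$ and $\infty_2$ participate in different families of blue triples (the triples $\{\infty_1,u_i,v_i\}$, $\{\infty_1,v_i,w_i\}$ versus $\{\infty_2,w_i,u_{i+1}\}$, $\{\infty_2,u_i,v_{i+1}\}$, with $\{\infty_1,\infty_2,w_i\}$ shared), their adjacency to $(A_0\cup A_1\cup A_2)\setminus X$ is governed by different residue patterns, and I must verify that the ``at most one is isolated'' clause of Lemma~\ref{lemma:inftyonetwocon} genuinely rules out the only dangerous scenario, namely both special vertices being simultaneously cut off from the rest while also lying in separate components. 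Once that clause is in hand, the gluing is routine. I would therefore organize the write-up as: (1) the trivial case $A_0\cup A_1\cup A_2\subseteq X$; (2) the case where the red subgraph is connected, attaching $\infty_1,\infty_2$ via Lemma~\ref{lemma:inftyonetwocon}; and (3) the remaining case handled wholesale by Lemma~\ref{lemma:2,3k+2}.
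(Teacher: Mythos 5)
Your proposal is correct and follows essentially the same route as the paper: dispose of the case where only special vertices remain, use Lemma~\ref{lemma:connected123} or Lemma~\ref{lemma:connected1} to get a connected red subgraph and attach $\infty_1,\infty_2$ via Lemma~\ref{lemma:inftyonetwocon}, and otherwise invoke Lemma~\ref{lemma:2,3k+2}. In fact you are slightly more careful than the paper's own write-up, which only spells out the subcases $\infty_1,\infty_2\in X$ and exactly one of them in $X$, whereas you explicitly handle both special vertices lying outside $X$ using the ``at most one is isolated'' clause of Lemma~\ref{lemma:inftyonetwocon}.
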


\begin{proof}%[Proof of Theorem~\ref{theorem:main3k+2}]
    Again, we prove that $G_{\mathcal{E}}(X)$ is connected, for each $X \subset V$. If $\infty_1, \infty_2$ are the only vertices not in $X$, then $G_{\mathcal{E}}(X)$ is connected. Now suppose that this is not the case.

    If the hypotheses of Lemma~\ref{lemma:connected123} or Lemma~\ref{lemma:connected1} are satisfied, the red subgraph is connected. If $\infty_1, \infty_2 \in X$, the graph $G_{\mathcal{E}}(X)$ has the same vertices as the red subgraph, but possibly some more (blue) edges. It is therefore connected. If $\infty_j \in X$ and $\infty_k \notin X$, then we have the (connected) red subgraph and additional vertices $\infty_k$ and some blue edges going from the red subgraph to $\infty_k$. The existence of at least one such edge is guaranteed by  Lemma~\ref{lemma:inftyonetwocon}.

    Lastly, if the hypotheses of neither Lemma~\ref{lemma:connected123} nor Lemma~\ref{lemma:connected1} are satisfied, then $G_{\mathcal{E}}(X)$ is connected using Lemma~\ref{lemma:2,3k+2}.
\end{proof}

\section{Remarks and open problems}

This problem appeared naturally during our investigation of finite metric spaces, while trying to characterize the sets $\mathcal{E}$ of triples such that whenever we have a metric space where all triples in $\mathcal{E}$ are collinear (that is, the triples satisfy the triangle inequality with equality), then all points are collinear (any triple satisfies triangle inequality with equality).
See~\cite{anchors} for the results of this investigation, and~\cite{ChvSurvey} for more general context regarding finite metric spaces and lines in them.
%See~\cite{anchors} for more context.
However, it is clear that the problem is interesting in its own right.

For $d>0$, we were not able to find the asymptotic value of $\varphi_d(n)$. In this paper, we proved that there are constants $c_1$ and $c_2$ (dependent on $d$) such that $c_1\cdot n^{d-1}\leq \varphi_d(n) \leq c_2\cdot n^{d-1}$.

{\bf Problem 1.} Improve the constants $c_1$ or $c_2$.

Our feeling is that the lower bound might be tight, but that the upper bound is nowhere close to the truth. The ultimate goal is to close the gap between the constants.

{\bf Problem 2.} Does there exist a constant $c = c(d)$ such that $\lim_{n\to \infty} \frac{\varphi_d(n)}{n^{d-1}} = c$?

While we would like to know the value of such $c_d$, even just establishing the existence of this limit, without finding its value, would be interesting.

\bibliographystyle{plain}
\bibliography{ida}

\end{document}